\documentclass[12pt,reqno]{amsart}
\usepackage{amsmath, amsfonts, amsthm, mathtools, fullpage, hyperref}
\usepackage[capitalise]{cleveref}
\usepackage[msc-links, alphabetic]{amsrefs}

\newcommand*{\map}[3]{#1 \colon #2 \to #3} 

\newcommand*{\abs}[1]{\left\vert #1 \right\vert} 

\newcommand*{\ip}[2]{\left\langle #1 , #2 \right\rangle} 

\newcommand*{\norm}[2][]{\left\Vert #2 \right\Vert_{#1}} 

\newcommand{\vol}{\ensuremath{\mathrm{vol}}}

\newtheorem{thm}{Theorem}[section]
\newtheorem{defn}[thm]{Definition}

\newtheorem{lemma}[thm]{Lemma}
\theoremstyle{remark}
\newtheorem*{remark}{Remark}

\makeatletter
\renewcommand\subsection{\@startsection{subsection}{2}%
  \z@{-.5\linespacing\@plus-.7\linespacing}{.5\linespacing}%
  {\normalfont\bfseries}}
\makeatother

\title{The dual Cheeger--Buser Inequality for Graphons}
\author{Mugdha Mahesh Pokharanakar}
\address{Department of Mathematics, Indian Institute of Science Education and Research Bhopal, 
Bhopal Bypass Road, Bhauri, Bhopal 462066, Madhya Pradesh, India}
\curraddr{}
\email{mugdha22@iiserb.ac.in}
\thanks{}
\date{\today}
\subjclass[2010]{05C50}

\keywords{Graphons, graph limits, Cheeger inequality, bipartiteness ratio.}

\begin{document}

\begin{abstract}
    We introduce the notion of bipartiteness ratio for graphons. 
    We prove the dual Cheeger--Buser inequality for graphons, which relates the gap 
    between $2$ and the \emph{top of the spectrum} of the Laplacian of a graphon 
    with its bipartiteness ratio. The dual Cheeger--Buser inequality was established 
    by Trevisan and Bauer--Jost for graphs. Our result is an analog of that for graphons.
\end{abstract}

\maketitle

\section{Introduction}

A discrete version of the Cheeger--Buser inequality on Riemmanian manifolds 
was established for simple graphs by Dodziuk \cite{Dodziuk84}, 
Alon--Milman \cite{Alon-Milman85}, and Alon \cite{Alon-vertexCheeger86}. 
It gives a relation between the second smallest eigenvalue of the normalized Laplacian 
of a (finite undirected) graph and its Cheeger constant.

It is known that the largest eigenvalue of the Laplacian of a graph
is $2$ if and only if the graph has a bipartite connected component.
Trevisan \cite{Trevisan-MaxCut12} and Bauer--Jost \cite{Bauer-Jost13} defined 
the constants which measure ``bipartiteness'' of a graph, namely, the bipartiteness ratio 
$\beta(G)$ and the dual Cheeger constant $\bar{h}$ of a graph $G$ respectively, 
and they related these constants to the gap $2 - \lambda^{\max}$ between $2$ and 
the largest eigenvalue $\lambda^{\max}$ of the normalized Laplacian of a graph $G$.
Bauer and Jost used a technique developed by Desai and Rao \cite{Desai-Rao94} 
in their work. Trevisan proved the following inequality, known as the dual 
Cheeger--Buser inequality, using probabilistic arguments.
\begin{equation*}
    \frac{\beta(G)^2}{2} \leq 2 - \lambda^{\max} \leq 2 \beta(G).
\end{equation*}

Lov\'asz and his collaborators \cites{Lovasz-Szegedy-limitsofDenseGraphs06,
Lovasz-etal-Counthomo06,Lovasz-etal-cgtseqDenseGraphs08} developed 
the theory of graph limits, through both algebraic and analytic perspectives. 
They studied graphons and graphings, which arise as limits of convergent sequences 
of graphs and bounded degree graphs, respectively. 
Several results regarding graphons and graphings are discussed quite extensively 
in the book by Lov\'asz \cite{Lovasz-LargeNet12}. 
The theory of graph limits has found lots of connections with many other branches 
of mathematics, including extremal graph theory, probability theory, higher-order 
Fourier analysis, ergodic theory, number theory, group theory, representation theory, 
category theory, the limit theory of metric spaces, and numerous applications 
in other subjects like computer science, network theory and statistical physics.

Various notions in the context of graphs have been extended to graph limits, 
for instance, homomorphism densities \cite{Lovasz-Szegedy-limitsofDenseGraphs06},
Szemer\'edi's regularity lemma \cite{Lovasz-Szegedy-SzemerdiLemma07},
independent sets, cliques, and colorings \cite{IndSets-cliques-colorings20}, 
and tilings \cite{TilingsGraphons-21}.
Khetan and Mj \cite{Abhishek-Mahan24} established the analogs of the discrete
Cheeger--Buser inequality for graphs in the case of graphons and graphings. 
Given a connected graphon $W$, having Cheeger constant $h_W$ and 
the bottom of the spectrum of its Laplacian $\lambda_W$, they proved that
\begin{equation*}
    \frac{h_W^2}{8} \leq \lambda_W \leq 2 h_W.
\end{equation*}
They also showed that the Cheeger--Buser inequality for regular graphs 
can be recovered from this inequality for graphons. 

We define the \emph{top of the spectrum} of the Laplacian of a graphon in 
\cref{section:prelim}, and the \emph{bipartiteness ratio} of a graphon in 
\cref{section:bpratio}. We prove the analog of the dual Cheeger--Buser inequality 
for graphons in \cref{section:CBIneq}. We establish the following result.

\begin{thm}[The dual Cheeger--Buser inequality for graphons] \label{thm:main}
    Let $W$ be a connected graphon, $\beta_W$ denote its bipartiteness ratio and
    $\lambda_W^{\max}$ denote the top of the spectrum of its Laplacian. 
    Then the following inequality holds.
    \begin{equation*}
        \frac{\beta_W^2}{2} \leq 2 - \lambda_W^{\max} \leq 2 \beta_W.
    \end{equation*}
\end{thm}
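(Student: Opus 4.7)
The plan is to mimic Trevisan's argument in the finite case, replacing vertex/edge sums by integrals over $[0,1]$ and over $[0,1]^2$ against $dx$ and $W(x,y)\,dx\,dy$. The starting point is the variational description
\begin{equation*}
    2-\lambda_W^{\max} \;=\; \inf_{f \neq 0}\; \frac{\iint_{[0,1]^2} W(x,y)\bigl(f(x)+f(y)\bigr)^2\, dx\, dy}{2\int_{[0,1]} d_W(x)\, f(x)^2\, dx},
\end{equation*}
obtained from the identity $(a-b)^2+(a+b)^2 = 2(a^2+b^2)$ applied to the Rayleigh quotient of the normalized graphon Laplacian from \cref{section:prelim}. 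Analogously, $\beta_W$ (from \cref{section:bpratio}) is the same quotient restricted to $\{-1,0,1\}$-valued $f$, with $(f(x)+f(y))^2$ replaced by $|f(x)+f(y)|$.

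For the upper bound I would use the pointwise inequality $(f(x)+f(y))^2 \leq 2|f(x)+f(y)|$, valid whenever $f\colon [0,1]\to\{-1,0,1\}$ because $|f(x)+f(y)|\in\{0,1,2\}$. Substituting an $\varepsilon$-minimizer of the bipartiteness quotient into the Rayleigh formula gives $2-\lambda_W^{\max} \leq 2(\beta_W + \varepsilon)$, and $\varepsilon \to 0$ yields $2-\lambda_W^{\max} \leq 2\beta_W$.

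The lower bound is the main step, and I would attack it with a continuum Cheeger-style rounding. Starting from an $\varepsilon$-minimizer $f$ of the Rayleigh quotient, a short truncation reduces to the case $\|f\|_\infty \leq 1$. For each $t \in (0,1]$ set $g_t(x) = \operatorname{sign}(f(x))\,\mathbf{1}_{\{f(x)^2 > t\}}$, a measurable $\{-1,0,1\}$-valued function. Fubini gives $\int_0^1 \abs{g_t(x)}\, dt = f(x)^2$, and a case analysis on the signs of $f(x),f(y)$ yields the pointwise bound
\begin{equation*}
    \int_0^1 \bigl|g_t(x)+g_t(y)\bigr|\, dt \;\leq\; \bigl|f(x)+f(y)\bigr|\,\bigl(\abs{f(x)}+\abs{f(y)}\bigr).
\end{equation*}
Integrating against $W(x,y)\,dx\,dy$, applying the Cauchy--Schwarz inequality, and using $\iint W(\abs{f(x)}+\abs{f(y)})^2 \leq 4\int d_W f^2$, I obtain
\begin{equation*}
    \frac{\int_0^1 \iint W\,\abs{g_t(x)+g_t(y)}\,dx\,dy\,dt}{\int_0^1\int d_W(x)\,\abs{g_t(x)}\,dx\,dt} \;\leq\; \sqrt{2(2-\lambda_W^{\max})}+o_\varepsilon(1).
\end{equation*}
A mean-value argument in $t$ then produces some $t_*$ whose bipartiteness quotient obeys the same bound, and $\varepsilon \to 0$ gives $\beta_W^2/2 \leq 2-\lambda_W^{\max}$.

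The hardest ingredient will be carrying out this rounding in the continuous setting: one must verify joint measurability of $(x,t)\mapsto g_t(x)$ so that Fubini applies to the iterated integrals, handle the potential non-attainment of the infimum in the Rayleigh quotient by threading $\varepsilon$-minimizers through every estimate, and justify the truncation step so that passing to a bounded $f$ worsens the quotient by at most $o_\varepsilon(1)$. These are the analytic adjustments that distinguish the graphon proof from Trevisan's discrete argument; once they are in place, the combinatorial content of the two proofs is essentially the same.
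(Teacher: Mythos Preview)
Your overall strategy matches the paper's: both halves are Trevisan's argument transported to the continuum, with the reduction to bounded test functions, level-set rounding $g_t$, the pointwise estimate $\int |g_t(x)+g_t(y)|\,dt \le |f(x)+f(y)|(|f(x)|+|f(y)|)$, Cauchy--Schwarz, and a mean-value step in $t$. The paper's \cref{lemma:lambda-max-Linfty,lemma:f/g&intf/intg,lemma:main} are exactly these pieces.

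There is one genuine slip, in your identification of $\beta_W$. For a $\{-1,0,1\}$-valued $f$ with $L=f^{-1}(-1)$ and $R=f^{-1}(1)$ the paper computes (see \eqref{eq:partitions<->fns})
\[
\beta_W(L,R)\;=\;\frac{\iint_{I^2} W(x,y)\,(f(x)+f(y))^2\,dx\,dy}{4\int_I d_W(x)\,f(x)^2\,dx},
\]
whereas your quotient with $|f(x)+f(y)|$ in the numerator and $2\int d_W f^2$ in the denominator works out to $\beta_W(L,R)+\dfrac{\eta((L\cup R)\times(L\cup R)^c)}{2\,\eta((L\cup R)\times I)}$, which is in general strictly larger. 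This breaks your upper-bound argument as written: after the pointwise inequality $(f(x)+f(y))^2 \le 2|f(x)+f(y)|$ you obtain $2-\lambda_W^{\max}\le 2\bigl(\beta_W(L,R)+\text{nonnegative extra}\bigr)$, and taking an $\varepsilon$-minimizer of $\beta_W$ does not control the extra term. The fix is simply to drop the pointwise inequality: with the correct formula above, the Rayleigh quotient of a $\{-1,0,1\}$-valued $f$ is \emph{exactly} $2\beta_W(L,R)$, and the upper bound is immediate. This is precisely what the paper does in \cref{lemma:buser}.

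Your lower-bound argument survives the slip, because your $|f+f|$ quotient dominates $\beta_W(L,R)$ and the rounding only needs an upper bound on $\beta_W$. The paper runs the same rounding with the $(g_t(x)+g_t(y))^2$ numerator directly (\cref{lemma:main}); the case analysis is slightly different from yours but the Cauchy--Schwarz step and the final estimate are identical.
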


We obtain an upper bound for $2 - \lambda_W^{\max}$ in \cref{lemma:buser}, 
using a characterization of $\beta_W$ in terms of functions taking values in $\{-1,0,1\}$, 
extending Trevisan's idea to graphons. Though $\lambda_W^{\max}$ is defined
using $L^2$ functions on $I$, \cref{lemma:lambda-max-Linfty} gives the expression 
for that in terms of only essentially bounded functions on $I$. \cref{lemma:lambda-max-Linfty} 
is inspired from the analogous lemma by Khetan and Mj \cite{Abhishek-Mahan24}*{Lemma 5.4}. 
\cref{lemma:f/g&intf/intg} shows that under suitable conditions, the ratio of 
the integrals of two functions is at least as large as the ratio of those functions 
at some point. In \cref{lemma:main}, we estimate the integrals of certain 
``suitable'' functions so that those estimates combined with \cref{lemma:f/g&intf/intg} 
give an upper bound for $\beta_W$ in terms of $\lambda_W^{\max}$, completing 
the proof of \cref{thm:main}. Both of these lemmas use techniques from 
Trevisan's arguments \cite{Trevisan-notes-expanders}*{Chapter 6}.

As a consequence of \cref{thm:main}, for any connected graphon $W$, we have 
$\lambda_W^{\max}$ is equal to $2$ if and only if $\beta_W$ is equal to $0$. 
In \cref{section:bipartiteGraphons}, we show that this is also equivalent to
the graphon $W$ being bipartite, provided its ``degree function'' is bounded away from zero.

In \cref{section:graphs&graphons}, we compare the largest eigenvalue of the Laplacian 
of a graph with the top of the spectrum of the Laplacian of the associated graphon, 
and the bipartiteness ratio of a graph with the bipartiteness ratio of 
the associated graphon, using ideas in the work of Khetan and Mj \cite{Abhishek-Mahan24}.
Using this comparison, \cref{thm:main} yields the dual Cheeger--Buser inequality
for graphs, up to a multiplicative constant.

\section{Preliminaries} \label{section:prelim}

In the following, by a measurable subset, we mean a Lebesgue measurable subset, 
and we denote the Lebesgue measure on $I = [0,1]$ by $\mu_L$.

A funcion $\map{W}{I^2}{I}$ is called a \emph{graphon} if $W$ is a Lebesgue measurable 
function which is symmetric, that is, $W(x,y) = W(y,x)$ for all $(x,y) \in I^2$. 
We say that a graphon $W$ is \emph{connected} if $\int_{A \times A^c} W > 0$
for every measurable subset $A$ of $I$ with $0 < \mu_L(A) < 1$.

Let $W$ be a connected graphon. For every measurable subset $A$ of $I$ and 
$S$ of $I^2$, define
\begin{equation*}
    \nu(A) \coloneq \int_{A \times I} W(x,y)\, \mathrm{d}x\, \mathrm{d}y, 
    \quad \text{ and } \quad
    \eta(S) \coloneq \int_{S} W(x,y)\, \mathrm{d}x\, \mathrm{d}y.
\end{equation*}
Then $\nu$ and $\eta$ are measures on $I$ and $I^2$ respectively. 
Note that the $\mathbb{R}$-vector space $L^2(I, \nu)$ is a Hilbert space with 
the inner product $\ip{\cdot}{\cdot}_v$, given by
\begin{equation*}
    \ip{f}{g}_v = \int_{I^2} f(x) g(x) W(x,y)\, \mathrm{d}y\, \mathrm{d}x,
\end{equation*}
for all $f,g \in L^2(I, \nu)$. We denote the restriction of the measure $\eta$
to the measurable subsets of the set $E = \{(x,y) \in I^2 : y > x\}$ also by $\eta$, 
and denote the inner product on the $\mathbb{R}$-Hilbert space $L^2(E, \eta)$
by $\ip{\cdot}{\cdot}_e$, which is given by
\begin{equation*}
    \ip{f}{g}_e = \int_0^1 \int_{x}^{1} f(x,y) g(x,y) W(x,y)\, \mathrm{d}y\, \mathrm{d}x,
\end{equation*}
for all $f, g \in L^2(E, \eta)$. The norms induced by the inner products 
$\ip{\cdot}{\cdot}_v$ and $\ip{\cdot}{\cdot}_e$ are denoted by $\norm[v]{\cdot}$
and $\norm[e]{\cdot}$ respectively.

Given any function $f \in L^2(I, \nu)$, define $df(x,y) \coloneq f(y) - f(x)$,
for all $(x,y) \in I^2$. Then it is proved in \cite{Abhishek-Mahan24}*{Lemma 3.3} that
the map $\map{d}{L^2(I, \nu)}{L^2(E, \eta)}$ which maps $f \in L^2(I, \nu)$ to 
the function $df|_E \in L^2(E, \eta)$ is a bounded linear operator. 
Let $\map{d^*}{L^2(E, \eta)}{L^2(I, \nu)}$ denote the adjoint of the operator $d$,
and define the \emph{Laplacian} $\Delta_W$ of $W$ by $\Delta_W = d^*d$, 
which is a bounded linear operator on the space $L^2(I, \nu)$. 

Given a graphon $W$, for all $x \in I$, the \emph{degree} of $x$ is defined by
\begin{equation*}
    d_W(x) \coloneq \int_I W(x,y)\, \mathrm{d}y.
\end{equation*}
If $W$ is a connected graphon, then $\eta(I^2) = \int_I d_W(x)\, \mathrm{d}x$ is positive,
and thus, $d_W$ is positive $\mu_L$-a.e. 
In that case, for every $f \in L^2(I,\nu)$ and $x \in I$ with $d_W(x) \neq 0$,
it is shown in \cite{Abhishek-Mahan24}*{Section 3.2} that
\begin{equation*}
    (\Delta_W f)(x) = f(x) - \frac{1}{d_W(x)} (T_W f)(x),
\end{equation*}
where the linear operator $\map{T_W}{L^2(I,\nu)}{L^2(I,\nu)}$ is defined by
\begin{equation*}
    (T_W f)(x) = \int_I W(x,y) f(y)\, \mathrm{d}y.
\end{equation*}
For the sake of brevity, we will write $\Delta_W = I - \frac{1}{d_W} T_W$,
where $I$ is the identity operator on $L^2(I,\nu)$.

\begin{defn}[Top of the spectrum]
    Given a conncted graphon $W$, the \emph{top of the spectrum} of its Laplacian $\Delta_W$, 
    denoted by $\lambda_W^{\max}$, is defined by
    \begin{equation*}
        \lambda_W^{\max} 
        \coloneq \sup_{f \in L^2(I,\nu) \setminus \{0\}} \frac{\ip{\Delta_W f}{f}_v}{\ip{f}{f}_v}.
    \end{equation*}
\end{defn}

Note that
\begin{equation*}
    \lambda_W^{\max} 
    = \sup_{f \in L^2(I,\nu) \setminus \{0\}} \frac{\norm[e]{df}^2}{\norm[v]{f}^2},
\end{equation*}
and it follows from the proof of \cite{Abhishek-Mahan24}*{Lemma 3.3} that 
$\lambda_W^{\max} \leq 4$. In fact, this bound improves to $2$, 
similar to that in the case of graphs, as shown in the following lemma.

\begin{lemma} \label{lemma:df-leq-2f}
    For any $f \in L^2(I, \nu)$, the inequality 
    $\norm[e]{df} \leq \sqrt{2} \norm[v]{f}$ holds, and consequently, we have
    $\lambda_W^{\max} \leq 2$.
\end{lemma}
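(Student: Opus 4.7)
The plan is to exploit the symmetry of $W$ to pass from the integral over the ``upper triangle'' $E$ to an integral over the full square $I^2$, then expand the square and bound the cross-term by AM--GM.

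First I would observe that since $W(x,y) = W(y,x)$ and $(f(y)-f(x))^2$ is symmetric in $(x,y)$, we have
\begin{equation*}
    \norm[e]{df}^2
    = \int_0^1 \!\! \int_x^1 (f(y)-f(x))^2 W(x,y)\, \mathrm{d}y\, \mathrm{d}x
    = \frac{1}{2} \int_{I^2} (f(y)-f(x))^2 W(x,y)\, \mathrm{d}y\, \mathrm{d}x,
\end{equation*}
after discarding the diagonal $\{y = x\}$, which is a $\mu_L\otimes\mu_L$-null set. Expanding the square gives three pieces; by the symmetry of $W$ the two ``diagonal'' pieces are equal, so
\begin{equation*}
    \norm[e]{df}^2
    = \int_{I^2} f(x)^2 W(x,y)\, \mathrm{d}y\, \mathrm{d}x
      - \int_{I^2} f(x) f(y) W(x,y)\, \mathrm{d}y\, \mathrm{d}x.
\end{equation*}
The first integral is exactly $\int_I f(x)^2 d_W(x)\, \mathrm{d}x = \norm[v]{f}^2$. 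To handle the cross-term, I would apply the elementary inequality $|ab| \le \tfrac{1}{2}(a^2 + b^2)$ pointwise with $a = f(x)$, $b = f(y)$, and use symmetry of $W$ once more to conclude that the absolute value of the cross-term is at most $\norm[v]{f}^2$. Combining these two bounds gives $\norm[e]{df}^2 \le 2\norm[v]{f}^2$, which is the desired inequality.

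The second assertion is then immediate: using the Rayleigh-quotient expression
\begin{equation*}
    \lambda_W^{\max}
    = \sup_{f \in L^2(I,\nu) \setminus \{0\}} \frac{\norm[e]{df}^2}{\norm[v]{f}^2}
\end{equation*}
recorded just before the lemma, the first part yields the uniform bound $2$ on every ratio in the supremum, whence $\lambda_W^{\max} \le 2$.

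I do not anticipate a genuine obstacle here; the only mild subtlety is to make sure that the ``doubling'' step is justified, i.e.\ that $(f(y)-f(x))^2 W(x,y)$ is in $L^1(I^2)$ so that Fubini legitimately converts the triangular integral into half of the square integral. This follows from the fact that $d \colon L^2(I,\nu) \to L^2(E,\eta)$ is already known (from the proof of \cite{Abhishek-Mahan24}*{Lemma 3.3}) to be a bounded operator, so $\norm[e]{df}$ is finite to begin with and all manipulations above are on genuinely integrable functions.
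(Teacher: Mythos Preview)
Your proposal is correct and follows essentially the same route as the paper: symmetrize to rewrite $\norm[e]{df}^2$ as half the integral over $I^2$, expand $(f(y)-f(x))^2$, and bound the cross term to obtain $2\norm[v]{f}^2$. The only cosmetic difference is that the paper bounds $\int_{I^2}|f(x)||f(y)|W$ via Cauchy--Schwarz rather than AM--GM, which yields the same estimate.
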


\begin{proof}
    Let $f \in L^2(I, \nu)$ be arbitrary. Then we have
    \begin{equation*}
        \norm[e]{df}^2 = \int_{E} (df)^2 W
        = \int_{0}^{1} \int_{x}^{1} (f(y) - f(x))^2 W(x,y)\, \mathrm{d}y\, \mathrm{d}x.
    \end{equation*}

    Using the fact that the graphon $W$ is symmetric, it follows that
    \begin{align*}
        \int_{0}^{1} \int_{x}^{1} (f(y) - f(x))^2 W(x,y)\, \mathrm{d}y\, \mathrm{d}x
        & = 
        \int_{0}^{1} \int_{y}^{1} (f(x) - f(y))^2 W(y,x)\, \mathrm{d}x\, \mathrm{d}y
        \\
        & =
        \int_{0}^{1} \int_{y}^{1} (f(x) - f(y))^2 W(x,y)\, \mathrm{d}x\, \mathrm{d}y
        \\
        & = 
        \int_{E'} (df)^2 W,
    \end{align*}
    where $E' \coloneq \{(x,y) \in I^2 : y < x\}$. Now observe that
    \begin{align*}
        \int_{I^2} (df)^2 W 
        & = 
        \int_{E} (df)^2 W + \int_{E'} (df)^2 W + \int_{\{(x,x) : x \in I\}} (df)^2 W
        \\
        & = 
        \int_{E} (df)^2 W + \int_{E'} (df)^2 W 
        \\
        & =
        2 \norm[e]{df}^2,
    \end{align*}
    and hence, we get
    \begin{align*}
        \norm[e]{df}^2 
        & = 
        \frac{1}{2} \int_0^1 \int_0^1 (f(x) - f(y))^2 W(x,y)\, \mathrm{d}x\, \mathrm{d}y
        \\
        & \leq
        \frac{1}{2} \int_0^1 \int_0^1 f(x)^2 W(x,y)\, \mathrm{d}x\, \mathrm{d}y 
        + \frac{1}{2} \int_0^1 \int_0^1 f(y)^2 W(x,y)\, \mathrm{d}x\, \mathrm{d}y
        \\
        & \quad +
        \int_0^1 \int_0^1 \abs{f(x)} \abs{f(y)} W(x,y)\, \mathrm{d}x\, \mathrm{d}y
        \\
        & =
        \norm[v]{f}^2 
        + \int_0^1 \int_0^1 \abs{f(x)} \abs{f(y)} W(x,y)\, \mathrm{d}x\, \mathrm{d}y.
    \end{align*}
    Now since the function $f$ lies in $L^2(I, \nu)$, the functions 
    \begin{equation*}
        (x,y) \mapsto |f(x)| \sqrt{W(x,y)} \quad \text{and} \quad 
        (x,y) \mapsto |f(y)| \sqrt{W(x,y)},
    \end{equation*}
    defined on $I^2$, are in $L^2(I^2)$. Then the Cauchy--Schwarz inequality implies that
    \begin{align*}
        & \hspace{0.6cm} \int_0^1 \int_0^1 \abs{f(x)} \abs{f(y)} W(x,y)\, \mathrm{d}x\, \mathrm{d}y
        \\
        & \leq 
        \left(\int_0^1 \int_0^1 f(x)^2 W(x,y)\, \mathrm{d}x\, \mathrm{d}y\right)^\frac{1}{2} 
        \left(\int_0^1 \int_0^1 f(y)^2 W(x,y)\, \mathrm{d}x\, \mathrm{d}y\right)^\frac{1}{2}
        \\
        & =
        \norm[v]{f}^2,
    \end{align*}
    from which we conclude that $\norm[e]{df}^2 \leq 2 \norm[v]{f}^2$.
\end{proof}

\section{Bipartiteness ratio of graphons} \label{section:bpratio}

Given any graph with vertex set $V$, a nonempty subset $S$ of $V$ and a bipartition
$\{L,R\}$ of $S$, Trevisan considered the ratio of the number of edges incident on $S$ 
which ``fail to be cut'' by the partition $\{L,R\}$ to the total number of edges
incident on $S$, and defined the bipartiteness ratio of the graph to be 
the minimum of such ratios over all nonempty subsets and their partitions. 
We refer to \cref{section:graphs&graphons} for the precise definition.
We extend this definition to graphons.

\begin{defn}[Bipartiteness ratio]
    The \emph{bipartiteness ratio} of a connected graphon $W$, denoted by $\beta_W$,
    is defined by
    \begin{equation*}
        \beta_W \coloneq \inf_{\substack{L,R \subseteq I \\ \text{measurable}\\
        \mu_L(L \cup R) > 0 \\ L \cap R = \emptyset}} \beta_W(L,R),
    \end{equation*}
    where for every measurable disjoint subsets $L$ and $R$ of $I$ 
    with $\mu_L(L \cup R)> 0$, we have
    \begin{equation*}
        \beta_W(L,R) 
        = \frac{2 \eta(L \times L) + 2 \eta(R \times R) + \eta((L \cup R) \times (L \cup R)^c)}
        {2 \eta((L \cup R) \times I)}.
    \end{equation*}
\end{defn}
Since the graphon $W$ is connected, the above quantity is well-defined.

We will write $\lambda_W^{\max}$, $\beta_W$ and $\beta_W(L,R)$ as $\lambda^{\max}$, 
$\beta(L,R)$ and $\beta$, respectively, when there is no room for confusion.

Khetan and Mj \cite{Abhishek-Mahan24}*{Lemma 3.2} proved that 
the Cheeger constant of any connected graphon is bounded above by $\frac{1}{2}$, 
using the strong mixing property of the doubling map. 
We follow the same arguments to prove that the bipartiteness ratio of connected graphons 
is also bounded above by $\frac{1}{2}$.

For the sake of completeness we define the notion of strong mixing and state 
its characterization that we will use here.

\begin{defn}[Strong mixing]
    Let $(\Omega,\mathcal{A},\mu)$ be a measure space. A measurable function 
    $\map{T}{\Omega}{\Omega}$ is called \emph{strong mixing} if it is a measure preserving 
    transformation, that is, $\mu(A) = \mu(T^{-1}(A))$ for all $A \in \mathcal{A}$, 
    and for all measurable subsets $A$, $B \in \mathcal{A}$, the function $T$ satisfies
    \begin{equation*}
        \lim_{n \to \infty} \mu(T^{-n}(A) \cap B) = \mu(A) \mu(B).
    \end{equation*} 
\end{defn}

It is well known that the doubling map $\map{S}{I}{I}$, defined by
\begin{equation*}
    S(x) \coloneq 
    \begin{cases}
        2x 
        & \text{if }\, 0 \leq x \leq \dfrac{1}{2},
        \\
        2x-1 
        & \text{if }\, \dfrac{1}{2} < x \leq 1,
    \end{cases}
\end{equation*}
is strong mixing. Then it follows that the function $\map{T}{I^2}{I^2}$, 
defined by $T = S \times S$, is also strong mixing. 
Here $I$ and $I^2$ are endowed with the Lebesgue measure.

\begin{lemma}[Strong mixing property]
    Let $(\Omega,\mathcal{A},\mu)$ be a measure space. Then a measure preserving 
    transformation $\map{T}{\Omega}{\Omega}$ is strong mixing if and only if 
    for all $f$, $g \in L^2(\mu)$, we have
    \begin{equation*}
        \lim_{n \to \infty} \int_\Omega (f \circ T^n)g\, \mathrm{d}\mu 
        = \int_\Omega f\, \mathrm{d}\mu \int_\Omega g\, \mathrm{d}\mu.
    \end{equation*}
\end{lemma}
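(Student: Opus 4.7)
The plan is to prove the two directions separately. The backward implication is the easier one: assuming the $L^{2}$-integral statement holds, I would simply specialize to $f = \chi_{A}$ and $g = \chi_{B}$ for measurable sets $A, B$ (which lie in $L^{2}(\mu)$ since the ambient measures in our application are finite). A direct computation gives $\int_{\Omega}(\chi_{A}\circ T^{n})\chi_{B}\,\mathrm{d}\mu = \mu(T^{-n}(A)\cap B)$ and $\int_{\Omega}\chi_{A}\,\mathrm{d}\mu\int_{\Omega}\chi_{B}\,\mathrm{d}\mu = \mu(A)\mu(B)$, so passing to the limit recovers the definition of strong mixing.

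For the forward implication, the strategy is a standard three-step density argument, starting from indicator functions and climbing up through simple functions to general $L^{2}$ functions. First, for $f = \chi_{A}$ and $g = \chi_{B}$, the identity $\int_{\Omega}(f\circ T^{n})g\,\mathrm{d}\mu = \mu(T^{-n}(A)\cap B)$ together with the hypothesis of strong mixing yields the desired limit. Second, by bilinearity of the map $(f,g)\mapsto \int_{\Omega}(f\circ T^{n})g\,\mathrm{d}\mu$ and of $(f,g)\mapsto \int f\,\mathrm{d}\mu\int g\,\mathrm{d}\mu$, the statement extends to all pairs of simple functions.

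Third, given arbitrary $f, g \in L^{2}(\mu)$ and $\varepsilon > 0$, I would pick simple functions $\tilde f, \tilde g$ with $\norm[2]{f-\tilde f} < \varepsilon$ and $\norm[2]{g-\tilde g}<\varepsilon$ (using density of simple functions in $L^{2}(\mu)$), and then split
\begin{equation*}
    \int_{\Omega}(f\circ T^{n})g\,\mathrm{d}\mu
    = \int_{\Omega}\bigl((f-\tilde f)\circ T^{n}\bigr)g\,\mathrm{d}\mu
    + \int_{\Omega}(\tilde f\circ T^{n})(g-\tilde g)\,\mathrm{d}\mu
    + \int_{\Omega}(\tilde f\circ T^{n})\tilde g\,\mathrm{d}\mu.
\end{equation*}
The first two terms can be bounded uniformly in $n$ by Cauchy--Schwarz, using the crucial fact that $T$ is measure preserving, hence $\norm[2]{h\circ T^{n}} = \norm[2]{h}$ for every $h \in L^{2}(\mu)$; this gives bounds of order $\varepsilon\norm[2]{g}$ and $\varepsilon(\norm[2]{f}+\varepsilon)$. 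The third term, by the simple-function case, converges to $\int \tilde f\,\mathrm{d}\mu\int \tilde g\,\mathrm{d}\mu$, which is within $O(\varepsilon)$ of $\int f\,\mathrm{d}\mu\int g\,\mathrm{d}\mu$ (via Cauchy--Schwarz applied with the constant function $1$, using finiteness of $\mu$ in our setting). Taking $n\to\infty$ and then $\varepsilon\to 0$ finishes the proof.

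The main obstacle is really a bookkeeping one: ensuring that the error terms are controlled \emph{uniformly in $n$}, which is exactly where the measure-preserving property is essential. Once one internalises that $f\mapsto f\circ T^{n}$ is an $L^{2}$-isometry, the rest of the argument is a routine three-$\varepsilon$ approximation.
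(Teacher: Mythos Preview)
The paper does not actually prove this lemma; it is stated without proof as a well-known characterization (the paper writes ``For the sake of completeness we define the notion of strong mixing and state its characterization that we will use here''). Your argument is the standard one and is correct: the backward implication is immediate by specializing to indicators, and the forward implication proceeds by the usual density argument, with the measure-preserving property supplying the $L^{2}$-isometry $\norm[2]{h\circ T^{n}} = \norm[2]{h}$ needed to control the error terms uniformly in $n$. Your remark that finiteness of $\mu$ is used (so that indicators lie in $L^{2}$ and the product $\int f\,\mathrm{d}\mu\int g\,\mathrm{d}\mu$ makes sense via $L^{2}\subseteq L^{1}$) is apt and matches the setting in which the paper applies the lemma.
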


We denote the characteristic function of a set $A$ by $1_A$. 

\begin{lemma} \label{lemma:beta-leq-half}
    For every connected graphon $W$, the inequality $\beta_W \leq \frac{1}{2}$ holds.
\end{lemma}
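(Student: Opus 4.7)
My plan is to mimic the Khetan--Mj strategy used for the Cheeger constant, by producing a sequence of admissible disjoint pairs $(L_n, R_n)$ along which $\beta_W(L_n, R_n)$ tends to $1/2$, exploiting the strong mixing of $T = S \times S$ on $I^2$. I will fix disjoint measurable subsets $L_0, R_0 \subseteq I$ of equal positive Lebesgue measure $\ell$ (concretely, one may take $L_0 = [0,1/2)$ and $R_0 = [1/2, 1]$), and then define $L_n \coloneq S^{-n}(L_0)$ and $R_n \coloneq S^{-n}(R_0)$. These remain disjoint by the set-theoretic properties of preimages, and since $S$ is measure preserving each of $L_n, R_n$ still has Lebesgue measure $\ell$, so every pair $(L_n, R_n)$ is admissible in the infimum defining $\beta_W$.

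Next, I would note that $W \in L^2(I^2, \mu_L \times \mu_L)$ (since $W$ is bounded and $I^2$ has finite measure), and apply the $L^2$-characterization of strong mixing with $g = W$ and $f$ taken in turn to be the indicator of each of $L_0 \times L_0$, $R_0 \times R_0$, $(L_0 \cup R_0) \times (L_0 \cup R_0)^c$, and $(L_0 \cup R_0) \times I$. Using the identity $1_{A \times B} \circ T^n = 1_{S^{-n}(A) \times S^{-n}(B)}$ together with the fact that preimages commute with unions and complements, the left-hand limit is precisely the corresponding $\eta$-measure appearing in either the numerator or the denominator of $\beta_W(L_n, R_n)$, while the right-hand side factors as a product of Lebesgue measures times $\eta(I^2)$.

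Substituting these four limits into the definition of $\beta_W(L_n, R_n)$ and simplifying, the common factor $\eta(I^2)$ cancels between numerator and denominator, and a short calculation using $\mu_L(L_0) = \mu_L(R_0) = \ell$ yields numerator $\to 2\ell\,\eta(I^2)$ and denominator $\to 4\ell\,\eta(I^2)$, hence $\beta_W(L_n, R_n) \to 1/2$. Since $\beta_W$ is an infimum, this forces $\beta_W \leq 1/2$. I do not anticipate any genuine obstacle; the only care needed is the bookkeeping of preimages to justify the identity above and the fact that we are working with a limit along a sequence rather than an inequality at a fixed $n$, which is accommodated precisely because $\beta_W$ is defined as an infimum.
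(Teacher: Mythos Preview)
Your proposal is correct and follows essentially the same approach as the paper: the paper also takes $L = [0,1/2]$, $R = (1/2,1]$, sets $L_n = S^{-n}(L)$, $R_n = S^{-n}(R)$, and applies the $L^2$-characterization of strong mixing of $T = S \times S$ to the same four indicator functions to obtain $\beta_W(L_n,R_n) \to 1/2$. The only cosmetic difference is that the paper works directly with the specific partition rather than introducing a general parameter $\ell$ first.
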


\begin{proof}
    Let $S$ denote the doubling map on $I$, as defined above, $T$ denote 
    the map $S \times S$, and $\eta_L$ denote the Lebesgue measure on $I^2$.
    Set $L = \left[0, \frac{1}{2}\right]$ and $R = \left(\frac{1}{2}, 1\right]$.
    Using the strong mixing property for $T$, we get
    \begin{equation*}
        \lim_{n \to \infty} \int_{I^2} (1_{L \times L} \circ T^n)W\, \mathrm{d}\eta_L 
        = \left(\int_{I^2} 1_{L \times L}\, \mathrm{d}\eta_L\right)
        \left(\int_{I^2} W\, \mathrm{d}\eta_L\right) = \frac{\eta(I^2)}{4},
    \end{equation*}
    \begin{equation*}
        \lim_{n \to \infty} \int_{I^2} (1_{R \times R} \circ T^n)W\, \mathrm{d}\eta_L 
        = \left(\int_{I^2} 1_{R \times R}\, \mathrm{d}\eta_L\right)
        \left(\int_{I^2} W\, \mathrm{d}\eta_L\right) = \frac{\eta(I^2)}{4},
    \end{equation*}
    \begin{equation*}
        \lim_{n \to \infty} \int_{I^2} (1_{(L \cup R) \times (L \cup R)^c} 
        \circ T^n)W\, \mathrm{d}\eta_L = 0, \tag{since the set $(L \cup R)^c$ is empty}
    \end{equation*}
    and
    \begin{equation*}
        \lim_{n \to \infty} \int_{I^2} (1_{(L \cup R) \times I} \circ T^n)W\, \mathrm{d}\eta_L 
        = \left(\int_{I^2} 1_{(L \cup R) \times I}\, \mathrm{d}\eta_L\right)
        \left(\int_{I^2} W\, \mathrm{d}\eta_L\right) = \eta(I^2).
    \end{equation*}
    For every $n \geq 1$, let $L_n$ and $R_n$ denote the measurable subsets 
    $S^{-n}(L)$ and $S^{-n}(R)$ of $I$, respectively. Since $L$ and $R$ are disjoint, 
    so are the sets $L_n$ and $R_n$ for all $n$. Also, the fact that $S$ is 
    measure preserving ensures that $\mu_L(L_n \cup R_n) > 0$. 
    Further, for all $n \geq 1$ and subsets $A, B$ of $I$, note that 
    $1_{A \times B} \circ T^n = 1_{T^{-n}(A \times B)} = 1_{S^{-n}(A) \times S^{-n}(B)}$. 
    Therefore, it follows that
    \begin{align*}
        & \hspace{0.6cm} \lim_{n \to \infty} \beta_W(L_n,R_n)
        \\
        & = 
        \lim_{n \to \infty} \frac{2 \eta(L_n \times L_n) + 2 \eta(R_n \times R_n) 
        + \eta((L_n \cup R_n) \times (L_n \cup R_n)^c)}{2 \eta((L_n \cup R_n) \times I)}
        \\
        & =
        \lim_{n \to \infty} \frac{2 \int_{I^2} 1_{L_n \times L_n} W\, \mathrm{d}\eta_L 
        + 2 \int_{I^2} 1_{R_n \times R_n} W\, \mathrm{d}\eta_L
        + \int_{I^2} 1_{(L_n \cup R_n) \times (L_n \cup R_n)^c} W\, \mathrm{d}\eta_L}
        {2 \int_{I^2} 1_{(L_n \cup R_n) \times I} W\, \mathrm{d}\eta_L}
        \\
        & =
        \frac{\displaystyle \lim_{n \to \infty} 
        \left[2 \int_{I^2} (1_{L \times L} \circ T^n)W\, \mathrm{d}\eta_L
        + 2 \int_{I^2} (1_{R \times R} \circ T^n)W\, \mathrm{d}\eta_L 
        + \int_{I^2} (1_{(L \cup R) \times (L \cup R)^c} \circ T^n)W\, 
        \mathrm{d}\eta_L\right]}
        {2 \displaystyle \lim_{n \to \infty} 
        \int_{I^2} (1_{(L \cup R) \times I} \circ T^n)W\, \mathrm{d}\eta_L}
        \\
        & =
        \frac{\frac{\eta(I^2)}{2} + \frac{\eta(I^2)}{2}}{2 \eta(I^2)}
        \\
        & =
        \frac{1}{2}.
    \end{align*}
    Now since for all $n$, we have $\beta_W \leq \beta_W(L_n,R_n)$, it follows that
    $\beta_W \leq \frac{1}{2}$.
\end{proof}

\begin{remark}
    In fact, the bound in the above lemma is sharp as can be seen from the following example. 
    If $W$ is a nonzero constant graphon, then for any disjoint measurable subsets
    $L$ and $R$ of $I$ with $\mu_L(L \cup R) > 0$, we have
    \begin{align*}
        \beta_W(L,R) \
        & =
        \frac{2 \eta(L \times L) + 2 \eta(R \times R) + \eta((L \cup R) \times (L \cup R)^c)}
        {2 \eta((L \cup R) \times I)}
        \\
        & =
        \frac{2 \mu_L(L)^2 + 2 \mu_L(R)^2 + (\mu_L(L) + \mu_L(R)) (1 - (\mu_L(L) + \mu_L(R)))}
        {2 (\mu_L(L) + \mu_L(R))}
        \\
        & =
        \frac{1}{2} + \frac{2 \mu_L(L)^2 + 2 \mu_L(R)^2 - (\mu_L(L) + \mu_L(R))^2}
        {2 (\mu_L(L) + \mu_L(R))}
        \\
        & =
        \frac{1}{2} + \frac{(\mu_L(L) - \mu_L(R))^2}{2 (\mu_L(L) + \mu_L(R))}
        \\
        & \geq \frac{1}{2},
    \end{align*} 
    and hence, using \cref{lemma:beta-leq-half}, we conclude that 
    the bipartiteness ratio of $W$ is equal to $\frac{1}{2}$.
\end{remark}

\section{The dual Cheeger--Buser inequality for graphons} \label{section:CBIneq}

\subsection{The dual Buser inequality}

Here we establish an upper bound for $2 - \lambda^{\max}$, by obtaining a characterization 
of $\beta$ in terms of functions taking values in $\{-1,0,1\}$, 
extending Trevisan's idea to graphons.

\begin{lemma} \label{lemma:buser}
    For every connected graphon $W$, the inequality $2 - \lambda^{\max} \leq 2 \beta$ holds.
\end{lemma}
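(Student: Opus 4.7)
The plan is to exhibit a concrete test function that almost achieves the supremum in the definition of $\lambda^{\max}$, namely a function valued in $\{-1,0,1\}$. Fix an arbitrary choice of measurable disjoint subsets $L, R$ of $I$ with $\mu_L(L \cup R) > 0$, and set
\begin{equation*}
    f \coloneq 1_R - 1_L.
\end{equation*}
Since $f$ takes values in $\{-1,0,1\}$ and is bounded, it lies in $L^2(I,\nu)$. I would then compute both $\norm[v]{f}^2$ and $\norm[e]{df}^2$ in terms of the measure $\eta$ evaluated on rectangles built from $L$ and $R$.

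For the denominator, $f(x)^2 = 1_{L \cup R}(x)$, so $\norm[v]{f}^2 = \eta((L \cup R) \times I)$. For the numerator, I would use the symmetrization shown in the proof of \cref{lemma:df-leq-2f} to write
\begin{equation*}
    \norm[e]{df}^2
    = \frac{1}{2} \int_{I^2} (f(y) - f(x))^2 W(x,y)\, \mathrm{d}x\, \mathrm{d}y
    = \norm[v]{f}^2 - \int_{I^2} f(x) f(y) W(x,y)\, \mathrm{d}x\, \mathrm{d}y,
\end{equation*}
where the last step uses $\int f(y)^2 W = \int f(x)^2 W = \norm[v]{f}^2$ by symmetry of $W$. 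The product $f(x)f(y)$ is $+1$ on $L\times L$ and $R\times R$, is $-1$ on $L\times R$ and $R\times L$, and is $0$ elsewhere, so
\begin{equation*}
    \int_{I^2} f(x) f(y) W = \eta(L \times L) + \eta(R \times R) - 2\eta(L \times R).
\end{equation*}

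The key algebraic identity to check is
\begin{equation*}
    \eta(L \times L) + \eta(R \times R) - 2\eta(L \times R)
    = \bigl(2\beta(L,R) - 1\bigr)\, \eta((L \cup R) \times I),
\end{equation*}
which follows by writing $\eta((L \cup R) \times (L \cup R)^c) = \eta((L\cup R) \times I) - \eta(L\times L) - \eta(R\times R) - 2\eta(L \times R)$ and substituting into the definition of $\beta(L,R)$. Combining this with the formula for $\norm[e]{df}^2$ gives
\begin{equation*}
    \frac{\norm[e]{df}^2}{\norm[v]{f}^2} = 2 - 2\beta(L,R).
\end{equation*}
Hence $\lambda^{\max} \geq 2 - 2\beta(L,R)$, and taking the infimum over all admissible pairs $(L,R)$ yields $2 - \lambda^{\max} \leq 2\beta$.

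There is no serious obstacle here; the argument is a direct test-function computation. The only point requiring care is the accounting in the algebraic identity above, where one must correctly split $\eta((L\cup R) \times I)$ into contributions from $L\times L$, $R\times R$, $L\times R$, $R\times L$, and $(L\cup R) \times (L\cup R)^c$, and use symmetry of $\eta$ to combine $\eta(L\times R)$ and $\eta(R\times L)$. This identity is exactly the ``characterization of $\beta$ in terms of $\{-1,0,1\}$-valued functions'' that is alluded to in the paper's outline.
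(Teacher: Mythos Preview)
Your proof is correct and follows essentially the same approach as the paper: both use the test function $f = 1_R - 1_L$ and compute its Rayleigh quotient to obtain $\lambda^{\max} \geq 2 - 2\beta(L,R)$. The only cosmetic difference is that the paper first rewrites $2 - \lambda^{\max}$ as an infimum of $\frac{\int (f(x)+f(y))^2 W}{2\int f(x)^2 W}$ and then matches this to $2\beta(L,R)$, whereas you work directly with the cross term $\int f(x)f(y)\,W$; the two computations are algebraically equivalent.
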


\begin{proof}
    Note that
    \begin{align*}
        2 - \lambda^{\max} 
        & = 
        2 - \sup_{f \in L^2(I, \nu) \setminus \{0\}} \frac{\norm[e]{df}^2}{\norm[v]{f}^2}
        \\
        & =
        2 - \sup_{f \in L^2(I, \nu) \setminus \{0\}} 
        \frac{\int_0^1 \int_0^1 (f(x) - f(y))^2 W(x,y)\, \mathrm{d}x\, \mathrm{d}y}
        {2 \int_0^1 \int_0^1 f(x)^2 W(x,y)\, \mathrm{d}x\, \mathrm{d}y}
        \\
        & =
        \inf_{f \in L^2(I, \nu) \setminus \{0\}} 
        \left(2 - \frac{\int_0^1 \int_0^1 (f(x) - f(y))^2 W(x,y)\, \mathrm{d}x\, \mathrm{d}y}
        {2 \int_0^1 \int_0^1 f(x)^2 W(x,y)\, \mathrm{d}x\, \mathrm{d}y}\right)
        \\
        & =
        \inf_{f \in L^2(I, \nu) \setminus \{0\}}
        \frac{4 \int_0^1 \int_0^1 f(x)^2 W(x,y)\, \mathrm{d}x\, \mathrm{d}y
        - \int_0^1 \int_0^1 (f(x) - f(y))^2 W(x,y)\, \mathrm{d}x\, \mathrm{d}y}
        {2 \int_0^1 \int_0^1 f(x)^2 W(x,y)\, \mathrm{d}x\, \mathrm{d}y}.
    \end{align*}
    The numerator in the above expression is
    \begin{align*}
        & \hspace{0.6cm}
        4 \int_0^1 \int_0^1 f(x)^2 W(x,y)\, \mathrm{d}x\, \mathrm{d}y
        - \int_0^1 \int_0^1 (f(x) - f(y))^2 W(x,y)\, \mathrm{d}x\, \mathrm{d}y
        \\
        & =
        3 \int_0^1 \int_0^1 f(x)^2 W(x,y)\, \mathrm{d}x\, \mathrm{d}y
        - \int_0^1 \int_0^1 f(y)^2 W(x,y)\, \mathrm{d}x\, \mathrm{d}y
        \\
        & \quad +
        2 \int_0^1 \int_0^1 f(x) f(y) W(x,y)\, \mathrm{d}x\, \mathrm{d}y
        \\
        & =
        2 \int_0^1 \int_0^1 f(x)^2 W(x,y)\, \mathrm{d}x\, \mathrm{d}y
        + 2 \int_0^1 \int_0^1 f(x) f(y) W(x,y)\, \mathrm{d}x\, \mathrm{d}y
        \\
        & =
        \int_0^1 \int_0^1 f(x)^2 W(x,y)\, \mathrm{d}x\, \mathrm{d}y 
        + \int_0^1 \int_0^1 f(y)^2 W(x,y)\, \mathrm{d}x\, \mathrm{d}y
        \\
        & \quad +
        2 \int_0^1 \int_0^1 f(x) f(y) W(x,y)\, \mathrm{d}x\, \mathrm{d}y
        \\
        & =
        \int_0^1 \int_0^1 (f(x) + f(y))^2 W(x,y)\, \mathrm{d}x\, \mathrm{d}y.
    \end{align*}
    Therefore, it follows that
    \begin{equation} \label{eq:2-lambdaMax}
        2 - \lambda^{\max} = \inf_{f \in L^2(I, \nu) \setminus \{0\}}
        \frac{\int_0^1 \int_0^1 (f(x) + f(y))^2 W(x,y)\, \mathrm{d}x\, \mathrm{d}y}
        {2 \int_0^1 \int_0^1 f(x)^2 W(x,y)\, \mathrm{d}x\, \mathrm{d}y}.
    \end{equation}
    In order to show that $2 - \lambda^{\max} \leq 2 \beta$, we now proceed 
    to obtain an expression for $\beta$ in terms of functions defined on $I$.
    
    Given any disjoint measurable subsets $L$ and $R$ of $I$ with $\mu_L(L \cup R) > 0$,
    define a function $\map{f}{I}{\{-1,0,1\}}$ for every $x \in I$ as follows.
    \begin{equation*}
        f(x) = 
        \begin{cases}
            -1 & \text{if } x \in L,
            \\
            1 & \text{if } x \in R,
            \\
            0 & \text{if } x \notin L \cup R.
        \end{cases}
    \end{equation*}
    Then $f$ is a nonzero function in $L^2(I, \nu)$, and we have
    \begin{equation} \label{eq:partitions<->fns}
        \beta_W(L,R)
        = \frac{\int_{0}^{1} \int_{0}^{1} (f(x) + f(y))^2 W(x,y)\, \mathrm{d}x\, \mathrm{d}y}
        {4 \int_0^1 \int_0^1 f(x)^2 W(x,y)\, \mathrm{d}x\, \mathrm{d}y}.
    \end{equation}
    To see this, observe that
    \begin{align*}
        & \hspace{0.6cm}
        \int_{0}^{1} \int_{0}^{1} (f(x) + f(y))^2 W(x,y)\, \mathrm{d}x\, \mathrm{d}y 
        \\
        & = 
        \int_{L \cup R} \int_{L \cup R} (f(x) + f(y))^2 W(x,y)\, \mathrm{d}x\, \mathrm{d}y
        + \int_{L \cup R} \int_{(L \cup R)^c} (f(x) + f(y))^2 W(x,y)\, \mathrm{d}x\, \mathrm{d}y
        \\
        & \quad +
        \int_{(L \cup R)^c} \int_{L \cup R} (f(x) + f(y))^2 W(x,y)\, \mathrm{d}x\, \mathrm{d}y
        + \int_{(L \cup R)^c} \int_{(L \cup R)^c} (f(x) + f(y))^2 W(x,y)\, \mathrm{d}x\, \mathrm{d}y
        \\
        & =
        \int_{L \cup R} \int_{L \cup R} (f(x) + f(y))^2 W(x,y)\, \mathrm{d}x\, \mathrm{d}y
        + 2 \eta((L \cup R) \times (L \cup R)^c) 
        \\
        & = 
        \int_L \int_L (f(x) + f(y))^2 W(x,y)\, \mathrm{d}x\, \mathrm{d}y
        + \int_L \int_R (f(x) + f(y))^2 W(x,y)\, \mathrm{d}x\, \mathrm{d}y 
        \\
        & \quad +
        \int_R \int_L (f(x) + f(y))^2 W(x,y)\, \mathrm{d}x\, \mathrm{d}y
        + \int_R \int_R (f(x) + f(y))^2 W(x,y)\, \mathrm{d}x\, \mathrm{d}y 
        \\
        & \quad +
        2 \eta((L \cup R) \times (L \cup R)^c) 
        \\
        & =
        4 \eta(L \times L) + 4 \eta(R \times R) + 2 \eta((L \cup R) \times (L \cup R)^c),
    \end{align*}
    and that
    \begin{align*} 
        & \hspace{0.6cm} 
        \int_0^1 \int_0^1 f(x)^2 W(x,y)\, \mathrm{d}x\, \mathrm{d}y 
        \\
        & =
        \int_{L \cup R} \int_I f(x)^2 W(x,y)\, \mathrm{d}x\, \mathrm{d}y
        + \int_{(L \cup R)^c} \int_I f(x)^2 W(x,y)\, \mathrm{d}x\, \mathrm{d}y
        \\
        & =
        \int_{L \cup R} \int_I W(x,y)\, \mathrm{d}x\, \mathrm{d}y 
        \\
        & =
        \eta((L \cup R) \times I).
    \end{align*}
    On the other hand, given a nonzero function $\map{f}{I}{\{-1,0,1\}}$ in $L^2(I, \nu)$,
    the sets $L = f^{-1}(-1)$ and $R = f^{-1}(1)$ are disjoint measurable subsets
    of $I$ with $\mu_L(L \cup R) > 0$ such that \cref{eq:partitions<->fns} holds. 
    Hence, we conclude that
    \begin{equation} \label{eq:beta-with-fns}
        \beta_W = \inf_{\substack{\map{f}{I}{\{-1,0,1\}} \\ f \in L^2(I, \nu) \setminus \{0\}}}
        \frac{\int_{0}^{1} \int_{0}^{1} (f(x) + f(y))^2 W(x,y)\, \mathrm{d}x\, \mathrm{d}y}
        {4 \int_0^1 \int_0^1 f(x)^2 W(x,y)\, \mathrm{d}x\, \mathrm{d}y}.
    \end{equation}
    Now combine \cref{eq:2-lambdaMax} and \cref{eq:beta-with-fns} to get the inequality 
    $2 - \lambda^{\max} \leq 2 \beta$.
\end{proof}

\subsection{The dual Cheeger inequality}

We obtain a lower bound on $2 - \lambda^{\max}$ with the help of some lemmas. 
The following lemma allows us to work with just essentially bounded functions
instead of all $L^2$ functions while dealing with $\lambda^{\max}$.
It is inspired from the analogous lemma in the work of Khetan and Mj 
\cite{Abhishek-Mahan24}*{Lemma 5.4}.

\begin{lemma} \label{lemma:lambda-max-Linfty}
    Given a connected graphon $W$, we have
    \begin{equation*}
        \lambda^{\max} = \sup_{f \in L^{\infty}(I, \nu) \setminus \{0\}}
        \frac{\norm[e]{df}^2}{\norm[v]{f}^2},
    \end{equation*}
    and consequently, the equality 
    \begin{equation*}
        2 - \lambda^{\max} = \inf_{f \in L^{\infty}(I, \nu) \setminus \{0\}}
        \frac{\int_0^1 \int_0^1 (f(x) + f(y))^2 W(x,y)\, \mathrm{d}x\, \mathrm{d}y}
        {2 \int_0^1 \int_0^1 f(x)^2 W(x,y)\, \mathrm{d}x\, \mathrm{d}y}
    \end{equation*}
    holds.
\end{lemma}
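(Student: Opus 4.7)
The plan is to establish the equality for $\lambda^{\max}$ by proving the two inequalities separately, and then deduce the consequent formula for $2 - \lambda^{\max}$ from the pointwise algebraic identity already recorded in the proof of \cref{lemma:buser}.

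For the inequality $\sup_{L^{\infty} \setminus \{0\}} \frac{\norm[e]{df}^2}{\norm[v]{f}^2} \leq \lambda^{\max}$, I would first observe that since $W$ takes values in $I = [0,1]$, the measure $\nu$ is finite (with $\nu(I) = \eta(I^2) \leq 1$), so $L^{\infty}(I,\nu) \subseteq L^2(I,\nu)$, and the supremum over a smaller set is at most the supremum over the larger set.

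For the reverse inequality, I would use a truncation argument. Given any $f \in L^2(I,\nu) \setminus \{0\}$, define the truncations $f_n \colon I \to \mathbb{R}$ by $f_n(x) = \max(-n, \min(n, f(x)))$, so that $f_n \in L^{\infty}(I,\nu)$. Since $|f_n|^2 \leq |f|^2 \in L^1(I,\nu)$ and $f_n \to f$ pointwise a.e., dominated convergence yields $\norm[v]{f_n} \to \norm[v]{f}$. The key observation for the numerator is that the truncation map $t \mapsto \max(-n, \min(n,t))$ is $1$-Lipschitz, so $|df_n(x,y)| = |f_n(y) - f_n(x)| \leq |f(y) - f(x)| = |df(x,y)|$ on $E$, while $df_n \to df$ pointwise a.e. Since $(df)^2 W$ is integrable on $E$ (as $df \in L^2(E,\eta)$ by \cite{Abhishek-Mahan24}*{Lemma 3.3}), dominated convergence again gives $\norm[e]{df_n} \to \norm[e]{df}$. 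Because $\norm[v]{f} > 0$, we have $\norm[v]{f_n} > 0$ for all sufficiently large $n$, hence the Rayleigh quotients converge: $\frac{\norm[e]{df_n}^2}{\norm[v]{f_n}^2} \to \frac{\norm[e]{df}^2}{\norm[v]{f}^2}$. Taking suprema produces the required inequality.

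The second equality follows immediately: the algebraic manipulation carried out at the start of the proof of \cref{lemma:buser} shows the pointwise identity
\begin{equation*}
    2 - \frac{\norm[e]{df}^2}{\norm[v]{f}^2}
    = \frac{\int_0^1 \int_0^1 (f(x) + f(y))^2 W(x,y)\, \mathrm{d}x\, \mathrm{d}y}
    {2 \int_0^1 \int_0^1 f(x)^2 W(x,y)\, \mathrm{d}x\, \mathrm{d}y}
\end{equation*}
for every nonzero $f$, so the first equality of the lemma (with the supremum taken over $L^{\infty}(I,\nu) \setminus \{0\}$) transforms directly into the claimed infimum formula. I do not anticipate any substantive obstacle beyond verifying the integrability of the dominating function $(df)^2 W$ and the finiteness of $\nu$, which are immediate from the setup.
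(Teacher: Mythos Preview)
Your argument is correct and takes a genuinely different route from the paper's. The paper proves the nontrivial inequality by an abstract density argument: given $\varepsilon>0$, it first picks $f\in L^2(I,\nu)$ with Rayleigh quotient within $\varepsilon/2$ of $\lambda^{\max}$, then invokes density of $L^\infty(I,\nu)$ in $L^2(I,\nu)$ to choose $g\in L^\infty(I,\nu)$ with $\norm[v]{f-g}$ small, and finally uses \cref{lemma:df-leq-2f} together with a carefully chosen constant $\varepsilon' = \min\bigl\{\tfrac{1}{\sqrt{2}},\,\tfrac{(\sqrt{2}-1)^2\varepsilon}{16(\sqrt{2}+1)}\bigr\}$ to transfer closeness in $\norm[v]{\cdot}$ to closeness of the Rayleigh quotients. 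Your truncation approach bypasses all of this bookkeeping: the $1$-Lipschitz property of $t\mapsto\max(-n,\min(n,t))$ gives the pointwise domination $|df_n|\le|df|$ for free, and two applications of dominated convergence finish the job. What you gain is a cleaner, more self-contained proof that does not need the operator bound of \cref{lemma:df-leq-2f}; what the paper's approach illustrates is the general principle that continuity of the Rayleigh quotient in the $L^2$-topology (granted by the boundedness of $d$) already suffices, without exploiting any special structure of the approximating sequence. For the second equality both proofs do the same thing, namely rerun the algebra leading to \cref{eq:2-lambdaMax} over $L^\infty$ instead of $L^2$.
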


\begin{proof}
    Since the measure space $I$ is $\nu$-finite, it is clear from the inclusion
    $L^{\infty}(I, \nu) \subseteq L^2(I, \nu)$ that $\lambda^{\max}$ is an upper 
    bound of the set 
    \begin{equation*}
        \left\{\frac{\norm[e]{df}^2}{\norm[v]{f}^2} : 
        f \in L^{\infty}(I, \nu) \setminus \{0\}\right\}.
    \end{equation*}
    Let $\varepsilon > 0$ be arbitrary. It is enough to show, for the first part,
    that there is a function $g \in L^{\infty}(I, \nu) \setminus \{0\}$ such that 
    the inequality
    \begin{equation*}
        \lambda^{\max} - \varepsilon < \frac{\norm[e]{dg}^2}{\norm[v]{g}^2}
    \end{equation*} 
    holds. The definition of $\lambda^{\max}$ guarantees the existence of a function
    $f \in L^2(I, \nu) \setminus \{0\}$ with 
    \begin{equation*}
        \lambda^{\max} - \frac{\varepsilon}{2} < \frac{\norm[e]{df}^2}{\norm[v]{f}^2}.
    \end{equation*} 
    So, we are done once we find $g \in L^{\infty}(I, \nu) \setminus \{0\}$ satisfying
    \begin{equation} \label{ineq:f&g-close}
        \frac{\norm[e]{df}^2}{\norm[v]{f}^2} - \frac{\norm[e]{dg}^2}{\norm[v]{g}^2}
        \leq \frac{\varepsilon}{2}.
    \end{equation}
    If the function $df$ is zero, then \eqref{ineq:f&g-close} holds by taking $g$ 
    to be the constant function $1$. Suppose $df$ is nonzero, and define 
    $M = \min\{\norm[v]{f}, \norm[e]{df}\}$, which is a positive real number. 
    As the space $L^{\infty}(I, \nu)$ is dense in $L^2(I, \nu)$, there exists 
    a function $g \in L^{\infty}(I, \nu)$ such that $\norm[v]{g-f} < \varepsilon'M$, 
    where $\varepsilon' = \min\left\{\frac{1}{\sqrt{2}}, \frac{(\sqrt{2} - 1)^2 \varepsilon}
    {16 (\sqrt{2} + 1)}\right\}$. Then, we get the inequality
    \begin{equation} \label{ineq:f&g}
        (1 - \varepsilon')\norm[v]{f} \leq \norm[v]{f} - \varepsilon'M
        < \norm[v]{g} < \norm[v]{f} + \varepsilon'M \leq (1 + \varepsilon')\norm[v]{f}.
    \end{equation}
    This ensures that $g$ is a nonzero function. Since we have 
    $\norm[e]{d(g-f)} < \sqrt{2} \varepsilon'M \leq \sqrt{2} \varepsilon' \norm[e]{df}$ 
    using \cref{lemma:df-leq-2f}, it follows that
    \begin{equation} \label{ineq:df&dg}
        (1 - \sqrt{2} \varepsilon')\norm[e]{df} < \norm[e]{dg} 
        < (1 + \sqrt{2} \varepsilon')\norm[e]{df}.
    \end{equation}
    Using \eqref{ineq:f&g} and the fact that $\varepsilon' \leq 1$, we get
    \begin{equation} \label{ineq:den-lambda-infty}
        \norm[v]{f}^2 \norm[v]{g}^2 \geq (1 - \varepsilon')^2 \norm[v]{f}^4,
    \end{equation}
    and combining \eqref{ineq:f&g}, \eqref{ineq:df&dg} and \cref{lemma:df-leq-2f} 
    gives us that
    \begin{equation} \label{ineq:num-lambda-infty}
        \norm[e]{df}^2 \norm[v]{g}^2 - \norm[e]{dg}^2 \norm[v]{f}^2 \leq ((1 + \varepsilon')^2 
        - (1 - \sqrt{2} \varepsilon')^2) \norm[v]{f}^2 \norm[e]{df}^2.
    \end{equation} 
    Now using \eqref{ineq:den-lambda-infty}, \eqref{ineq:num-lambda-infty} and the fact 
    that $\norm[e]{df}^2 \leq 2\norm[v]{f}^2$ along with the definition of $\varepsilon'$, 
    we arrive at the desired inequality \eqref{ineq:f&g-close}. 

    In order to prove the second part, repeat the arguments used to obtain 
    \cref{eq:2-lambdaMax} by replacing $L^2(I, \nu)$ with $L^{\infty}(I, \nu)$.
\end{proof}

The following two lemmas follow ideas in Trevisan's proof (see
\cite{Trevisan-MaxCut12}*{Section 3.2} and \cite{Trevisan-notes-expanders}*{Chapter 6}), 
of the analogous inequality in the case of graphs. 

\begin{lemma} \label{lemma:f/g&intf/intg}
    Let $(\Omega, \mathcal{A}, \mu)$ be a measure space with $\mu(\Omega) > 0$,
    and $\map{f}{\Omega}{\mathbb{R}}$, $\map{g}{\Omega}{(0,\infty)}$ be integrable
    functions. Then there exists an $x_0 \in \Omega$ such that 
    \begin{equation*}
        \frac{f(x_0)}{g(x_0)} \leq \frac{\int_{\Omega} f(x)\, \mathrm{d}\mu(x)}
        {\int_{\Omega} g(x)\, \mathrm{d}\mu(x)}.
    \end{equation*}
\end{lemma}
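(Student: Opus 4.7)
The plan is to argue by contradiction using the simple averaging identity that expresses $c := \int_\Omega f \, \mathrm{d}\mu / \int_\Omega g \, \mathrm{d}\mu$ as a weighted mean of the pointwise ratios $f/g$.

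First I would verify that $c$ is well-defined. Since $g$ takes values in $(0,\infty)$ pointwise and $\mu(\Omega) > 0$, the integral $\int_\Omega g \, \mathrm{d}\mu$ is strictly positive (this follows from the standard fact, recalled below, that a measurable function which is strictly positive everywhere on a set of positive measure has a strictly positive integral). Thus $c \in \mathbb{R}$ is defined, and by construction
\begin{equation*}
    \int_\Omega \bigl(f(x) - c\, g(x)\bigr)\, \mathrm{d}\mu(x) = 0.
\end{equation*}

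Next I would suppose, toward a contradiction, that the conclusion fails, i.e.\ that $f(x)/g(x) > c$ for every $x \in \Omega$. Using $g > 0$ pointwise, this is equivalent to $f(x) - c\, g(x) > 0$ for every $x \in \Omega$. To derive a contradiction, I would apply the auxiliary observation that any measurable function $h \colon \Omega \to \mathbb{R}$ which is strictly positive at every point of $\Omega$ satisfies $\int_\Omega h \, \mathrm{d}\mu > 0$ whenever $\mu(\Omega) > 0$: indeed, writing $A_n = \{x \in \Omega : h(x) > 1/n\}$, each $A_n$ is measurable, the sets increase to $\Omega$, so $\lim_n \mu(A_n) = \mu(\Omega) > 0$ by continuity of $\mu$ from below, and hence $\mu(A_N) > 0$ for some $N$, which yields $\int_\Omega h \, \mathrm{d}\mu \geq (1/N)\mu(A_N) > 0$. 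Applying this to $h = f - cg$ contradicts the identity $\int_\Omega (f - cg)\, \mathrm{d}\mu = 0$ displayed above.

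There is no genuine obstacle here; the argument is a clean averaging/pigeonhole principle, and the only point that needs a line of justification is the elementary measure-theoretic fact that a strictly positive measurable function integrates to a strictly positive number on a set of positive measure. I would present the proof in two short paragraphs corresponding to these two steps.
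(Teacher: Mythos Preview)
Your proposal is correct and follows essentially the same route as the paper: set $c=\int_\Omega f\,\mathrm{d}\mu/\int_\Omega g\,\mathrm{d}\mu$, note that $\int_\Omega(f-cg)\,\mathrm{d}\mu=0$, and derive a contradiction from the assumption that $f-cg>0$ everywhere. The only difference is that you spell out, via the sets $A_n=\{h>1/n\}$, why a strictly positive measurable function on a space of positive measure has strictly positive integral, whereas the paper simply asserts this.
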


\begin{proof}
    Since the function $g$ takes only positive values and $\mu(\Omega) > 0$,
    we have $\int_{\Omega} g(x)\, \mathrm{d}\mu(x) > 0$. Further, note that 
    \begin{equation} \label{eq:int0}
        \int_{\Omega} \left(f(x) - \frac{\int_{\Omega} f(t)\, \mathrm{d}\mu(t)}
        {\int_{\Omega} g(t)\, \mathrm{d}\mu(t)} g(x) \right) \mathrm{d}\mu(x) = 0.
    \end{equation}
    If $f(x) - \frac{\int_{\Omega} f(t)\, \mathrm{d}\mu(x)}
    {\int_{\Omega} g(t)\, \mathrm{d}\mu(t)}g(x)$ is positive for all $x$, then 
    its integration over $\Omega$ is positive, as $\mu(\Omega)$ is positive,
    which contradicts \cref{eq:int0}. Hence, there is an $x_0 \in \Omega$ such that 
    \begin{equation*}
        f(x_0) - \frac{\int_{\Omega} f(x_)\, \mathrm{d}\mu(x)}
        {\int_{\Omega} g(x)\, \mathrm{d}\mu(x)}g(x_0) \leq 0,
    \end{equation*}
    that is
    \begin{equation*}
        \frac{f(x_0)}{g(x_0)} \leq \frac{\int_{\Omega} f(x)\, \mathrm{d}\mu(x)}
        {\int_{\Omega} g(x)\, \mathrm{d}\mu(x)}.
    \end{equation*}
\end{proof}

The above lemma shows that under suitable conditions, the ratio of the integrals
of two functions is at least as large as the ratio of those functions at some point.
In the following lemma, we estimate the integrals of certain ``suitable'' functions 
so that those estimates combined with \cref{lemma:f/g&intf/intg} give 
an upper bound for $\beta$ in terms of $\lambda^{\max}$.

\begin{lemma} \label{lemma:main}
    Let $f$ be an arbitrary element of $L^2(I, \nu)$. For every $t > 0$, the sets 
    defined by $L_t = f^{-1}((-\infty, -t])$ and $R_t = f^{-1}([t, \infty))$ 
    have the properties
    \begin{align} \label{ineq:int-num}
        & \hspace{0.6cm}
        \int_{0}^{\infty} 2t \left[2 \eta(L_t \times L_t) + 2 \eta(R_t \times R_t)
        + \eta((L_t \cup R_t) \times (L_t \cup R_t)^c)\right] \mathrm{d}t \nonumber
        \\
        & \leq
        2 \left(\int_{0}^{1} \int_{0}^{1} (f(x) + f(y))^2 W(x,y)\, \mathrm{d}x\, \mathrm{d}y
        \right)^\frac{1}{2} \left(\int_{0}^{1} \int_{0}^{1} f(x)^2 W(x,y)\, 
        \mathrm{d}x\, \mathrm{d}y\right)^\frac{1}{2},
    \end{align}
    and
    \begin{equation} \label{eq:int-den}
        \int_{0}^{\infty} 2t \left[2 \eta((L_t \cup R_t) \times I)\right] \mathrm{d}t
        = 2 \int_{0}^{1} \int_{0}^{1} f(x)^2 W(x,y)\, \mathrm{d}x\, \mathrm{d}y.
    \end{equation}
\end{lemma}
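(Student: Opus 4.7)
The plan is to apply Fubini (the layer-cake identity) to reduce the $t$-integrals to pointwise expressions in $(x,y)$, and then conclude the inequality by Cauchy--Schwarz. For the equality \cref{eq:int-den}, note that $x \in L_t \cup R_t$ iff $\abs{f(x)} \geq t$ and $\int_0^{\abs{f(x)}} 2t\, \mathrm{d}t = f(x)^2$, so swapping the order of integration gives
\begin{equation*}
    \int_0^{\infty} 2t \cdot 2\eta((L_t \cup R_t) \times I)\, \mathrm{d}t
    = 2 \int_0^1 \int_0^1 f(x)^2 W(x,y)\, \mathrm{d}y\, \mathrm{d}x.
\end{equation*}

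For \cref{ineq:int-num}, the same layer-cake computation applied separately to each of the three terms rewrites the left-hand side as $\int_0^1 \int_0^1 g(x,y) W(x,y)\, \mathrm{d}y\, \mathrm{d}x$, where
\begin{equation*}
    g(x,y) = 2 \cdot 1_{f(x) f(y) \geq 0}(x,y) \min(f(x)^2, f(y)^2) + \max(f(x)^2 - f(y)^2, 0).
\end{equation*}
Indeed, $(x,y) \in L_t \times L_t$ for some $t > 0$ forces $f(x), f(y) \leq 0$ and yields $\int_0^{\infty} 2t \cdot 1_{L_t \times L_t}(x,y)\, \mathrm{d}t = \min(f(x)^2, f(y)^2)$ (similarly for $R_t \times R_t$ with $f(x), f(y) \geq 0$), while $(x,y) \in (L_t \cup R_t) \times (L_t \cup R_t)^c$ iff $\abs{f(x)} \geq t > \abs{f(y)}$, which contributes $\max(f(x)^2 - f(y)^2, 0)$.

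The central step is then the pointwise inequality $g(x,y) \leq \abs{f(x) + f(y)} (\abs{f(x)} + \abs{f(y)})$, which I would verify by a short case split on the signs of $f(x), f(y)$ and which of $\abs{f(x)}, \abs{f(y)}$ is larger; for instance, in the opposite-sign case with $\abs{f(x)} \geq \abs{f(y)}$ both sides equal $f(x)^2 - f(y)^2$, and in the same-sign case the right-hand side is $(\abs{f(x)} + \abs{f(y)})^2$, which dominates $g$. Once this is established, Cauchy--Schwarz with respect to the finite measure $W(x,y)\, \mathrm{d}x\, \mathrm{d}y$ on $I^2$ gives
\begin{equation*}
    \int_0^1 \int_0^1 g(x,y) W(x,y)\, \mathrm{d}y\, \mathrm{d}x
    \leq \left(\int_0^1 \int_0^1 (f(x)+f(y))^2 W\right)^{1/2} \left(\int_0^1 \int_0^1 (\abs{f(x)} + \abs{f(y)})^2 W\right)^{1/2},
\end{equation*}
and the elementary bound $(\abs{f(x)} + \abs{f(y)})^2 \leq 2(f(x)^2 + f(y)^2)$ together with the symmetry $\int_0^1 \int_0^1 f(x)^2 W = \int_0^1 \int_0^1 f(y)^2 W$ (from the symmetry of $W$) shows that the second factor is at most $2 \left(\int_0^1 \int_0^1 f(x)^2 W\right)^{1/2}$, yielding \cref{ineq:int-num}. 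The main obstacle is organising the pointwise case analysis cleanly; the rest is a routine application of Fubini and Cauchy--Schwarz.
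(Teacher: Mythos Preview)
Your proposal is correct and follows essentially the same approach as the paper: Fubini to reduce to a pointwise expression, the pointwise bound by $\abs{f(x)+f(y)}(\abs{f(x)}+\abs{f(y)})$, then Cauchy--Schwarz and $(\abs{a}+\abs{b})^2 \leq 2a^2+2b^2$. The only cosmetic difference is that the paper carries out the layer-cake computation by explicitly partitioning $I^2$ into five regions $A_1,\dots,A_5$ according to the signs and relative sizes of $f(x),f(y)$, whereas you package the same case analysis into the single closed-form expression $g(x,y)=2\cdot 1_{f(x)f(y)\geq 0}\min(f(x)^2,f(y)^2)+\max(f(x)^2-f(y)^2,0)$; the resulting function and the subsequent estimates are identical.
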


\begin{proof}
    Note that
    \begin{align*}
        & \hspace{0.6cm}
        \int_{0}^{\infty} 2t \left[2 \eta(L_t \times L_t) + 2 \eta(R_t \times R_t)
        + \eta((L_t \cup R_t) \times (L_t \cup R_t)^c)\right] \mathrm{d}t 
        \\
        & = 
        \int_{0}^{\infty} 2t \left[\int_0^1 \int_{0}^{1} \left(2 \cdot 1_{L_t \times L_t}
        + 2 \cdot 1_{R_t \times R_t} + 1_{(L_t \cup R_t) \times (L_t \cup R_t)^c}\right) 
        W(x,y)\, \mathrm{d}x\, \mathrm{d}y\right] \mathrm{d}t
        \\
        & = 
        \int_{I^2} \left[\int_{0}^{\infty} 2t 
        (2 \cdot 1_{L_t \times L_t}(x,y) + 2 \cdot 1_{R_t \times R_t}(x,y) 
        1_{(L_t \cup R_t) \times (L_t \cup R_t)^c}(x,y))\, \mathrm{d}t\right]
        W(x,y)\, \mathrm{d}x\, \mathrm{d}y. \tag{using the Fubini--Tonelli theorem}
    \end{align*}
    Define the sets
    \begin{align*}
        A_1 & = \{(x,y) \in I^2 : 0 \leq f(x) \leq f(y)\},
        \\
        A_2 & = \{(x,y) \in I^2 : 0 \leq f(y) < f(x)\},
        \\
        A_3 & = \{(x,y) \in I^2 : f(x) < f(y) \leq 0\},
        \\
        A_4 & = \{(x,y) \in I^2 : f(y) \leq f(x) \leq 0\},
        \\
        A_5 & = \{(x,y) \in I^2 : f(x)f(y) < 0, \abs{f(y)} < \abs{f(x)}\}.
    \end{align*}
    Now given any $(x,y) \in I^2$, observe that
    \begin{equation*}
        1_{L_t \times L_t}(x,y) =
        \begin{cases}
            1 & \text{if } (x,y) \in A_3 \text{ and } t \in (0, -f(y)],
            \\
            1 & \text{if } (x,y) \in A_4 \text{ and } t \in (0, -f(x)],
            \\
            0 & \text{otherwise,}
        \end{cases} 
    \end{equation*}
    \begin{equation*}
        1_{R_t \times R_t}(x,y) =
        \begin{cases}
            1 & \text{if } (x,y) \in A_1 \text{ and } t \in (0, f(x)],
            \\
            1 & \text{if } (x,y) \in A_2 \text{ and } t \in (0, f(y)],
            \\
            0 & \text{otherwise,}
        \end{cases} 
    \end{equation*}
    and that
    \begin{equation*}
        1_{(L_t \cup R_t) \times (L_t \cup R_t)^c}(x,y) =
        \begin{cases}
            1 & \text{if } (x,y) \in A_2 \text{ and } t \in (f(y), f(x)],
            \\
            1 & \text{if } (x,y) \in A_3 \text{ and } t \in (-f(y), -f(x)],
            \\
            1 & \text{if } (x,y) \in A_5 \text{ and } t \in (\abs{f(y)}, \abs{f(x)}],
            \\
            0 & \text{otherwise.}
        \end{cases} 
    \end{equation*}
    Thus, we get
    \begin{align*}
        & \hspace{0.6cm} \int_0^1 \int_{0}^{1} 
        \left[\int_{0}^{\infty} 2t \left(2 \cdot 1_{L_t \times L_t}(x,y)
        \right) \mathrm{d}t\right] W(x,y)\, \mathrm{d}x\, \mathrm{d}y
        \\
        & = 
        \int_{A_3} \left(\int_{0}^{-f(y)} 4t\, \mathrm{d}t\right) W(x,y)\, 
        \mathrm{d}x\, \mathrm{d}y + \int_{A_4} \left(\int_{0}^{-f(x)} 4t\, 
        \mathrm{d}t\right) W(x,y)\, \mathrm{d}x\, \mathrm{d}y
        \\
        & =
        \int_{A_3} 2f(y)^2 W(x,y)\, \mathrm{d}x\, \mathrm{d}y 
        + \int_{A_4} 2f(x)^2 W(x,y)\, \mathrm{d}x\, \mathrm{d}y,
    \end{align*}
    and
    \begin{align*}
        & \hspace{0.6cm} \int_0^1 \int_{0}^{1} 
        \left[\int_{0}^{\infty} 2t \left(2 \cdot 1_{R_t \times R_t}(x,y)
        \right) \mathrm{d}t\right] W(x,y)\, \mathrm{d}x\, \mathrm{d}y
        \\
        & = 
        \int_{A_1} \left(\int_{0}^{f(x)} 4t\, \mathrm{d}t\right) W(x,y)\, 
        \mathrm{d}x\, \mathrm{d}y + \int_{A_2} \left(\int_{0}^{f(y)} 4t\, 
        \mathrm{d}t\right) W(x,y)\, \mathrm{d}x\, \mathrm{d}y
        \\
        & =
        \int_{A_1} 2f(x)^2 W(x,y)\, \mathrm{d}x\, \mathrm{d}y 
        + \int_{A_2} 2f(y)^2 W(x,y)\, \mathrm{d}x\, \mathrm{d}y,
    \end{align*}
    and 
    \begin{align*}
        & \hspace{0.6cm} \int_0^1 \int_{0}^{1} 
        \left[\int_{0}^{\infty} 2t \left(1_{(L_t \cup R_t) \times (L_t \cup R_t)^c}
        (x,y)\right) \mathrm{d}t\right] W(x,y)\, \mathrm{d}x\, \mathrm{d}y
        \\
        & = 
        \int_{A_2} \left(\int_{f(y)}^{f(x)} 2t\, \mathrm{d}t\right) W(x,y)\, 
        \mathrm{d}x\, \mathrm{d}y + \int_{A_3} \left(\int_{-f(y)}^{-f(x)} 2t\, 
        \mathrm{d}t\right) W(x,y)\, \mathrm{d}x\, \mathrm{d}y 
        \\
        & \quad +
        \int_{A_5} \left(\int_{\abs{f(y)}}^{\abs{f(x)}} 2t\, \mathrm{d}t\right)
        W(x,y)\, \mathrm{d}x\, \mathrm{d}y
        \\
        & =
        \int_{A_2} (f(x)^2 - f(y)^2) W(x,y)\, \mathrm{d}x\, \mathrm{d}y 
        + \int_{A_3} (f(x)^2 - f(y)^2) W(x,y)\, \mathrm{d}x\, \mathrm{d}y
        \\
        & \quad +
        \int_{A_5} (f(x)^2 - f(y)^2) W(x,y)\, \mathrm{d}x\, \mathrm{d}y.
    \end{align*}
    Therefore, we finally have
    \begin{align*}
        & \hspace{0.6cm}
        \int_{0}^{\infty} 2t \left[2 \eta(L_t \times L_t) + 2 \eta(R_t \times R_t)
        + \eta((L_t \cup R_t) \times (L_t \cup R_t)^c)\right] \mathrm{d}t 
        \\
        & =
        \int_{A_1} 2f(x)^2 W(x,y)\, \mathrm{d}x\, \mathrm{d}y 
        + \int_{A_2} (f(x)^2 + f(y)^2) W(x,y)\, \mathrm{d}x\, \mathrm{d}y
        \\
        & \quad +
        \int_{A_3} (f(x)^2 + f(y)^2) W(x,y)\, \mathrm{d}x\, \mathrm{d}y
        + \int_{A_4} 2f(x)^2 W(x,y)\, \mathrm{d}x\, \mathrm{d}y
        \\
        & \quad +
        \int_{A_5} (f(x)^2 - f(y)^2) W(x,y)\, \mathrm{d}x\, \mathrm{d}y,
    \end{align*}
    which is finite, as the function $f$ lies in $L^2(I, \nu)$.  
    Now since each of the above integrands is less than or equal to $\abs{f(x) + f(y)}
    (\abs{f(x)} + \abs{f(y)}) W(x,y)$, it follows that
    \begin{align*}
        & \hspace{0.6cm}
        \int_{0}^{\infty} 2t \left[2 \eta(L_t \times L_t) + 2 \eta(R_t \times R_t)
        + \eta((L_t \cup R_t) \times (L_t \cup R_t)^c)\right] \mathrm{d}t 
        \\
        & \leq
        \int_{0}^{1} \int_{0}^{1} \abs{f(x) + f(y)}(\abs{f(x)} + \abs{f(y)})
        W(x,y)\, \mathrm{d}x\, \mathrm{d}y
        \\
        & \leq 
        \left(\int_{0}^{1} \int_{0}^{1} (f(x) + f(y))^2 W(x,y)\, \mathrm{d}x\, 
        \mathrm{d}y\right)^\frac{1}{2} 
        \left(\int_{0}^{1} \int_{0}^{1} (\abs{f(x)} + \abs{f(y)})^2
        W(x,y)\, \mathrm{d}x\, \mathrm{d}y\right)^\frac{1}{2}
        \tag{using the Cauchy--Schwarz inequality in $L^2(I^2)$}
        \\
        & \leq 
        \left(\int_{0}^{1} \int_{0}^{1} (f(x) + f(y))^2 W(x,y)\, \mathrm{d}x\, 
        \mathrm{d}y\right)^\frac{1}{2} 
        \left(\int_{0}^{1} \int_{0}^{1} (2f(x)^2 + 2f(y)^2)
        W(x,y)\, \mathrm{d}x\, \mathrm{d}y\right)^\frac{1}{2}
        \tag{for real numbers $a, b$, $(\abs{a} + \abs{b})^2 \leq 2a^2 + 2b^2$}
        \\
        & =
        2 \left(\int_{0}^{1} \int_{0}^{1} (f(x) + f(y))^2 W(x,y)\, \mathrm{d}x\, \mathrm{d}y
        \right)^\frac{1}{2} \left(\int_{0}^{1} \int_{0}^{1} f(x)^2 W(x,y)\, 
        \mathrm{d}x\, \mathrm{d}y\right)^\frac{1}{2}.
    \end{align*}
    Similar calculations give us
    \begin{align*}
        \int_{0}^{\infty} 2t \left[2 \eta((L_t \cup R_t) \times I)\right] \mathrm{d}t
        & =
        \int_{0}^{1} \int_{0}^{1} \left(\int_{0}^{\infty} 4t \cdot 1_{(L_t \cup R_t)
        \times I}(x,y)\, \mathrm{d}t\right) W(x,y)\, \mathrm{d}x\, \mathrm{d}y
        \\
        & =
        \int_{0}^{1} \int_{0}^{1} \left(\int_{0}^{\abs{f(x)}} 4t\, \mathrm{d}t\right) 
        W(x,y)\, \mathrm{d}x\, \mathrm{d}y
        \\
        & =
        2 \int_{0}^{1} \int_{0}^{1} f(x)^2 W(x,y)\, \mathrm{d}x\, \mathrm{d}y,
    \end{align*}
    as desired.
\end{proof}

Note that the measures $\mu_L$ and $\nu$ on $I$ are absolutely continuous with
respect to each other, and hence we have $L^{\infty}(I, \mu_L) = L^{\infty}(I, \nu)$.
Let us denote these spaces simply by $L^{\infty}(I)$. Also, observe that 
if $f$ lies in $L^{\infty}(I)$, then its essential supremum with respect to 
both the measures is the same. We denote it by $\norm[\infty]{f}$.

\begin{proof}[Proof of \cref{thm:main}]
    We have already proved one of the inequalities in \cref{lemma:buser}. 
    For the other inequality, thanks to \cref{lemma:lambda-max-Linfty}, it suffices 
    to show that for every nonzero function $f \in L^{\infty}(I)$, there exist 
    disjoint measurable subsets $L$ and $R$ of $I$ with $\mu_L(L \cup R) > 0$ such that 
    the following inequality holds.
    \begin{equation*}
        \beta(L,R) 
        \leq \left(\frac{\int_0^1 \int_0^1 (f(x) + f(y))^2 W(x,y)\, \mathrm{d}x\, \mathrm{d}y}
        {\int_0^1 \int_0^1 f(x)^2 W(x,y)\, \mathrm{d}x\, \mathrm{d}y}\right)^\frac{1}{2}.
    \end{equation*}

    Let $f$ be any nonzero function in $L^{\infty}(I)$. 
    For every $t \in (0, \norm[\infty]{f})$, the sets $L_t$ and $R_t$, as defined
    in \cref{lemma:main}, are disjoint measurable subsets of $I$ with 
    $\mu_L(L_t \cup R_t) > 0$. Also, for $t > \norm[\infty]{f}$, the sets 
    $L_t$ and $R_t$ have measure zero. This implies that
    \begin{align*}
        & \hspace{0.6cm}
        \int_{0}^{\infty} 2t \left[2 \eta(L_t \times L_t) + 2 \eta(R_t \times R_t)
        + \eta((L_t \cup R_t) \times (L_t \cup R_t)^c)\right] \mathrm{d}t
        \\
        & = 
        \int_{0}^{\norm[\infty]{f}} 2t \left[2 \eta(L_t \times L_t) + 2 \eta(R_t \times R_t)
        + \eta((L_t \cup R_t) \times (L_t \cup R_t)^c)\right] \mathrm{d}t
    \end{align*}
    and that 
    \begin{equation*}
        \int_{0}^{\infty} 2t \left[2 \eta((L_t \cup R_t) \times I)\right] \mathrm{d}t
        = \int_{0}^{\norm[\infty]{f}} 2t \left[2 \eta((L_t \cup R_t) \times I)\right] \mathrm{d}t,
    \end{equation*}
    where the integrand $4t \eta((L_t \cup R_t) \times I)$ is positive for 
    every $t \in (0, \norm[\infty]{f})$. Hence, using \cref{lemma:main}, we arrive
    at the inequality
    \begin{align*}
        & \hspace{0.6cm} \frac{\int_{0}^{\norm[\infty]{f}} 2t \left[2 \eta(L_t \times L_t)
        + 2 \eta(R_t \times R_t) + \eta((L_t \cup R_t) \times (L_t \cup R_t)^c)\right] 
        \mathrm{d}t}{\int_{0}^{\norm[\infty]{f}} 2t \left[2 \eta((L_t \cup R_t) 
        \times I)\right] \mathrm{d}t}
        \\
        & \leq
        \frac{2 \left(\int_{0}^{1} \int_{0}^{1} (f(x) + f(y))^2 W(x,y)\, \mathrm{d}x\, 
        \mathrm{d}y\right)^\frac{1}{2} \left(\int_{0}^{1} \int_{0}^{1} f(x)^2 W(x,y)\, 
        \mathrm{d}x\, \mathrm{d}y\right)^\frac{1}{2}}
        {2 \int_{0}^{1} \int_{0}^{1} f(x)^2 W(x,y)\, \mathrm{d}x\, \mathrm{d}y}
        \\
        & =
        \left(\frac{\int_0^1 \int_0^1 (f(x) + f(y))^2 W(x,y)\, \mathrm{d}x\, \mathrm{d}y}
        {\int_0^1 \int_0^1 f(x)^2 W(x,y)\, \mathrm{d}x\, \mathrm{d}y}\right)^\frac{1}{2}.
    \end{align*}
    Now \cref{lemma:f/g&intf/intg} guarantees that there is a 
    $t_0 \in (0, \norm[\infty]{f})$ such that 
    \begin{equation*}
        \beta(L_{t_0},R_{t_0}) \leq 
        \left(\frac{\int_0^1 \int_0^1 (f(x) + f(y))^2 W(x,y)\, \mathrm{d}x\, \mathrm{d}y}
        {\int_0^1 \int_0^1 f(x)^2 W(x,y)\, \mathrm{d}x\, \mathrm{d}y}\right)^\frac{1}{2},
    \end{equation*}
    which completes the proof.
\end{proof}

\section{Bipartite graphons} \label{section:bipartiteGraphons}

Khetan and Mj \cite{Abhishek-Mahan24}*{Section 7.3} gave necessary and sufficient
conditions for a graphon to be conncted, under some suitable hypothesis.
In this section, we characterize bipartite graphons in terms of the top of the spectrum
of their Laplacians and their bipartiteness ratios, under the same hypothesis. 
We start by recalling the definition of bipartite graphons.

\begin{defn}[Bipartite graphon]
    A graphon $W$ is said to be \emph{bipartite} if there exist disjoint measurable subsets
    $L$ and $R$ of $I$ such that $L \cup R = I$ and $W$ is zero almost everywhere
    on $L \times L$ and $R \times R$ (with respect to the Lebesgue measure on $I^2$).
\end{defn}

We will use \cite{Abhishek-Mahan24}*{Lemma 7.11} which states that if $W$ is
a connected graphon and the function $d_W$ is bounded below by a positive real number,
then the operator $\map{\frac{1}{d_W} T_W}{L^2(I,\nu)}{L^2(I,\nu)}$ is compact.

\begin{lemma} \label{lemma:actualEigenVal}
    Let $W$ be a connected graphon such that $d_W$ is bounded below by a positive real number.
    Then $\lambda_W^{\max}$ is an eigenvalue of the Laplacian $\Delta_W$ of $W$.
\end{lemma}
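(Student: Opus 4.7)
The plan is to exploit the structural decomposition $\Delta_W = I - K$, where $K \coloneq \tfrac{1}{d_W}T_W$ is self-adjoint on $L^2(I, \nu)$ and, crucially, \emph{compact} under the hypothesis that $d_W$ is bounded below by a positive constant (precisely the cited result of Khetan--Mj). Thus $\Delta_W$ is a compact self-adjoint perturbation of the identity, and the spectral theorem for compact self-adjoint operators applies to $K$, so $\sigma(K) \setminus \{0\}$ consists of real eigenvalues of finite multiplicity accumulating only at $0$. Since $\sigma(\Delta_W) = 1 - \sigma(K)$, the problem reduces to showing that $1 - \inf \sigma(K) = \lambda_W^{\max}$ is attained.

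The main step I would carry out is a weak-compactness argument. Take a maximizing sequence $(f_n) \subseteq L^2(I, \nu)$ with $\norm[v]{f_n} = 1$ and $\ip{K f_n}{f_n}_v \to 1 - \lambda_W^{\max}$. By Banach--Alaoglu, pass to a subsequence satisfying $f_n \rightharpoonup f$ weakly. Compactness of $K$ upgrades this to $K f_n \to K f$ in norm, and combining norm convergence of $(K f_n)$ with weak convergence of $(f_n)$ yields $\ip{K f_n}{f_n}_v \to \ip{K f}{f}_v$, so $\ip{K f}{f}_v = 1 - \lambda_W^{\max}$. Weak lower semicontinuity of the norm gives $\norm[v]{f} \leq 1$. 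In the main case $\lambda_W^{\max} > 1$, the right-hand side is strictly negative, forcing $f \neq 0$; moreover $\norm[v]{f} < 1$ would contradict the infimum characterization of $1 - \lambda_W^{\max}$, so $\norm[v]{f} = 1$. A standard first-variation / Lagrange multiplier argument then produces $K f = (1 - \lambda_W^{\max})f$, equivalently $\Delta_W f = \lambda_W^{\max} f$.

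The main obstacle is the boundary case $\lambda_W^{\max} = 1$, where the Rayleigh-quotient inequalities degenerate: here $K \geq 0$ and $\ip{K f}{f}_v = 0$ only forces $K f = 0$ (via $\norm{\sqrt{K} f}^2 = 0$) at the weak limit, but the weak limit $f$ itself could vanish, in which case the argument above does not directly produce an eigenvector. To handle this, I would examine $\ker K$ directly: using the orthogonal decomposition $L^2(I, \nu) = \ker K \oplus \overline{\operatorname{span}\{\text{eigenvectors of } K \text{ with nonzero eigenvalue}\}}$ afforded by compactness and self-adjointness, either $\ker K \neq \{0\}$ (in which case any nonzero element is the desired eigenvector for $\lambda_W^{\max} = 1$), or one must rule out the alternative using the specific structure of the kernel $W(x,y)/d_W(x)$. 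In the intended applications of this lemma (in particular the bipartite characterization, where one is interested in $\lambda_W^{\max} = 2$) one has $\lambda_W^{\max} > 1$, so this boundary subtlety does not affect the downstream use.
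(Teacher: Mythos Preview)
Your approach is correct and rests on the same key ingredient as the paper---the compactness of $K = \tfrac{1}{d_W}T_W$ from \cite{Abhishek-Mahan24}*{Lemma 7.11}---but you take a more hands-on route. The paper argues in two lines of standard operator theory: since $\Delta_W$ is bounded and self-adjoint, $\lambda_W^{\max}$ lies in its spectrum and is therefore an approximate eigenvalue; hence $1 - \lambda_W^{\max}$ is an approximate eigenvalue of the compact operator $K$, and every \emph{nonzero} approximate eigenvalue of a compact operator is an honest eigenvalue. Your weak-compactness/variational argument essentially reproves this last fact from scratch for the minimum of the Rayleigh quotient of $K$. The paper's version is shorter because it outsources the work to cited results; yours is more self-contained and makes the mechanism (weak limits plus compactness upgrading to norm convergence) explicit.

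One point worth noting: the boundary case $\lambda_W^{\max} = 1$ (equivalently $1 - \lambda_W^{\max} = 0$) that you flag is in fact also not addressed by the paper's proof, since the cited fact about compact operators requires the approximate eigenvalue to be nonzero. You are right that this does not affect the application in \cref{lemma:bipartiteGraphonChar}, where $\lambda_W^{\max} = 2$; and one can check separately (e.g.\ via $\beta_W \le \tfrac12$ and the dual Buser inequality) that $\lambda_W^{\max} \ge 1$ always, with equality forcing $K \ge 0$ and $\inf\sigma(K) = 0$, at which point the question reduces to whether $\ker K \ne \{0\}$. So your caveat is well placed, and your discussion of it is, if anything, more careful than the paper's.
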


\begin{proof}
    Since the Laplacian of $W$ is a self-adjoint bounded linear operator on the Hilbert space 
    $L^2(I,\nu)$, its top of the spectrum $\lambda_W^{\max}$ is its approximate eigenvalue,
    using \cite{BVLimaye-FA96}*{Theorem 27.5(a)}. Then $1 - \lambda_W^{\max}$ 
    is an approximate eigenvalue of the operator 
    $I - \Delta_W = I - \left(I - \frac{1}{d_W} T_W\right) = \frac{1}{d_W} T_W$.
    Note that $\frac{1}{d_W} T_W$ is a compact operator by \cite{Abhishek-Mahan24}*{Lemma 7.11}.
    We know that every nonzero approximate eigenvalue of a compact operator on a Hilbert space 
    is its eigenvalue (see \cite{BVLimaye-FA96}*{Lemma 28.4(a)} for instance). 
    Hence, in our case, $1 - \lambda_W^{\max}$ is an eigenvalue of $\frac{1}{d_W} T_W$.
    Then it follows that $\lambda_W^{\max}$ is an eigenvalue of $\Delta_W$.
\end{proof}

The following lemma characterizes bipartite graphons.

\begin{lemma} \label{lemma:bipartiteGraphonChar}
    Let $W$ be a connected graphon such that $d_W$ is bounded below by a positive real number.
    Then the following statements are equivalent.
    \begin{enumerate}
        \item $\beta_W = 0$.
        \item $\lambda_W^{\max} = 2$.
        \item The graphon $W$ is bipartite.
    \end{enumerate}
\end{lemma}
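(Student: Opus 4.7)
The plan is to use \cref{thm:main} for $(1) \Leftrightarrow (2)$ and to verify $(3) \Rightarrow (1)$ directly from the definitions, leaving $(2) \Rightarrow (3)$ as the main content. The bipartition witnessing $(3)$ will be recovered from an eigenfunction of $\Delta_W$ at eigenvalue $2$, whose existence is supplied by \cref{lemma:actualEigenVal}.

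For $(1) \Leftrightarrow (2)$, the sandwich $\beta_W^2/2 \leq 2 - \lambda_W^{\max} \leq 2\beta_W$ of \cref{thm:main}, combined with $\lambda_W^{\max} \leq 2$ from \cref{lemma:df-leq-2f}, makes each of the conditions $\beta_W = 0$ and $\lambda_W^{\max} = 2$ equivalent to $2 - \lambda_W^{\max} = 0$. For $(3) \Rightarrow (1)$, I would plug a bipartite witness $I = L \cup R$ into the definition of $\beta_W(L, R)$: bipartiteness gives $\eta(L \times L) = \eta(R \times R) = 0$, the complement $(L \cup R)^c$ is null, and $\eta((L \cup R) \times I) = \int_I d_W > 0$ by connectivity, so $\beta_W(L, R) = 0$ and hence $\beta_W = 0$.

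The substantive direction is $(2) \Rightarrow (3)$. Assuming $\lambda_W^{\max} = 2$, \cref{lemma:actualEigenVal} produces a nonzero $f \in L^2(I, \nu)$ with $\Delta_W f = 2f$, hence $\norm[e]{df}^2 = 2 \norm[v]{f}^2$. Substituting into the identity
\[
2 - \frac{\norm[e]{df}^2}{\norm[v]{f}^2} = \frac{\int_0^1 \int_0^1 (f(x) + f(y))^2 W(x,y)\, \mathrm{d}x\, \mathrm{d}y}{2 \int_0^1 \int_0^1 f(x)^2 W(x,y)\, \mathrm{d}x\, \mathrm{d}y}
\]
derived in the proof of \cref{lemma:buser} forces the numerator to vanish, so $(f(x) + f(y))^2 W(x, y) = 0$ for $\mu_L \times \mu_L$-a.e.\ $(x, y)$; equivalently, $f(y) = -f(x)$ wherever $W(x, y) > 0$.

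Now I would split $I$ by the sign of $f$ as $L = f^{-1}((0, \infty))$, $R = f^{-1}((-\infty, 0))$, and $Z = f^{-1}(0)$. If $x \in Z$ and $y \in L \cup R$, then $f(x) + f(y) = f(y) \neq 0$, so $W$ vanishes a.e.\ on $Z \times Z^c$, i.e.\ $\eta(Z \times Z^c) = 0$. Connectivity of $W$ then forces $\mu_L(Z) \in \{0, 1\}$, and $\mu_L(Z) = 1$ would imply $f = 0$ $\mu_L$-a.e., hence in $L^2(I, \nu)$ since $\mathrm{d}\nu = d_W\, \mathrm{d}\mu_L$, contradicting $f \neq 0$. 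Thus $\mu_L(Z) = 0$, so $L$ and $R$ cover $I$ up to a null set that can be absorbed into either part. The same vanishing condition rules out $W > 0$ a.e.\ on $L \times L$ (where $f(x) + f(y) > 0$) and on $R \times R$, exhibiting $W$ as bipartite. The main obstacle is controlling the zero set $Z$ of the eigenfunction, which is exactly what the connectivity hypothesis closes off.
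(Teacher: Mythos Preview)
Your proposal is correct and follows essentially the same route as the paper: the easy implications are dispatched via the dual Cheeger--Buser inequality and the definitions, and for $(2)\Rightarrow(3)$ you invoke \cref{lemma:actualEigenVal} to obtain an eigenfunction $f$ at eigenvalue $2$, deduce that $(f(x)+f(y))^2 W(x,y)=0$ a.e., split $I$ by the sign of $f$, and use connectivity to show the zero set of $f$ is null. The only cosmetic differences are that the paper swaps the roles of $L$ and $R$ and cites only the Buser direction (\cref{lemma:buser}) for $(1)\Rightarrow(2)$, closing the cycle via $(2)\Rightarrow(3)\Rightarrow(1)$ rather than proving $(1)\Leftrightarrow(2)$ directly.
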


\begin{proof}
    The implication $(1) \implies (2)$ follows from the dual Buser inequality 
    (\cref{lemma:buser}), and the implication $(3) \implies (1)$ is a direct
    consequence of the definitions of $\beta_W$ and bipartite graphons.
    Now we prove that $(2) \implies (3)$.

    Suppose that $\lambda_W^{\max} = 2$. Then \cref{lemma:actualEigenVal} ensures that 
    $2$ is an eigenvalue of $\Delta_W$. Let $f \in L^2(I,\nu)$ be 
    its corresponding eigenfunction. Then the arguments similar to those used
    to prove \cref{eq:2-lambdaMax} yield the equation
    \begin{equation*}
        \frac{\int_{I^2} (f(x) + f(y))^2 W(x,y)\, \mathrm{d}x\, \mathrm{d}y}
        {2 \int_{I^2} f(x)^2 W(x,y)\, \mathrm{d}x\, \mathrm{d}y} = 0,
    \end{equation*}
    and hence, 
    \begin{equation} \label{eq:num2-lambda}
        \int_{I^2} (f(x) + f(y))^2 W(x,y)\, \mathrm{d}x\, \mathrm{d}y = 0.
    \end{equation}
    Denote the sets $f^{-1}(-\infty,0)$ and $f^{-1}(0,\infty)$ by $L$ and $R$ respectively.
    Then \cref{eq:num2-lambda} gives that 
    \begin{equation*}
        \int_{(L \cup R) \times (L \cup R)^c} 
        (f(x) + f(y))^2 W(x,y)\, \mathrm{d}x\, \mathrm{d}y = 0.
    \end{equation*}
    For any $(x,y) \in (L \cup R) \times (L \cup R)^c$, since we have 
    $(f(x) + f(y))^2 = f(x)^2 > 0$, it follows that $W$ is zero almost everywhere
    on $(L \cup R) \times (L \cup R)^c$. Now since $W$ is connected, this implies that
    the Lebesgue measure of either $L \cup R$ or its complement is zero. But the fact 
    that the function $f$ is nonzero forces $(L \cup R)^c$ to have measure zero.
    Let $L'$ denote the set $L \cup (L \cup R)^c$. Note that $L'$ and $R$ are
    disjoint measurable subsets of $I$, and their union is $I$. We are done
    once we show that $W$ is zero almost everywhere on $L' \times L'$ and $R \times R$.
    It follows from \cref{eq:num2-lambda} that 
    \begin{equation*}
        \int_{L \times L} (f(x) + f(y))^2 W(x,y)\, \mathrm{d}x\, \mathrm{d}y = 0
        \quad \text{and} \quad
        \int_{R \times R} (f(x) + f(y))^2 W(x,y)\, \mathrm{d}x\, \mathrm{d}y = 0.
    \end{equation*}
    For all $(x,y) \in L \times L$, the quantity $(f(x) + f(y))^2$ is positive, 
    and therefore, $W$ is zero almost everywhere on $L \times L$, and hence 
    also on $L' \times L'$, as $(L \cup R)^c$ has measure zero. 
    Similarly, it follows that $W$ is zero almost everywhere on $R \times R$.
\end{proof}

\section{Graphs and the associated graphons} \label{section:graphs&graphons}

Let $V$ denote the set $\{1, \dots, n\}$ with $n \geq 2$, and $\map{w}{V \times V}
{I}$ be a symmetric function, that is, $w(i,j) = w(j,i)$ for all $i,j \in V$.
The pair $G = (V,w)$ is called a \emph{weighted graph}. We will denote $w(i,j)$ 
by $w_{ij}$, for all $i,j \in V$. The weighted graph $G$ is said to be \emph{loopless}
if $w_{ii} = 0$ for all $i \in V$. For any subsets $A, B$ of $V$, define
\begin{equation*}
    e_G(A,B) = \sum_{i \in A, j \in B} w_{ij}.
\end{equation*}
In the following, we always assume that $G$ is \emph{connected}, which means that
for any nonempty proper subset $A$ of $V$, $e_G(A,A^c)$ is positive. 
For any $i \in V$, the \emph{volume} of $i$ is defined by $\vol(i) = \sum_{j \in V} w_{ij}$. 
Note that $\vol(i)$ is positive for all $i$, since the graph $G$ is connected.

Let $\ell^2(V)$ denote the Hilbert space 
of all functions from $V$ to $\mathbb{R}$, equipped with the inner product
\begin{equation*}
    \ip{f_1}{f_2} \coloneq \sum_{i \in V} f_1(i) f_2(i),
\end{equation*}
for every $f_1,f_2 \in \ell^2(V)$. 
The \emph{Laplacian} $\map{\Delta_G}{\ell^2(V)}{\ell^2(V)}$ of the weighted graph
$G$ is a linear operator defined by
\begin{equation*}
    (\Delta_G g)(i) \coloneq 
    g(i) - \frac{1}{\sqrt{\vol(i)}} \sum_{j \in V} \frac{g(j) w_{ij}}{\sqrt{\vol(j)}},
\end{equation*}
for every $g \in \ell^2(V)$ and $i \in V$. It is a self-adjoint operator, since 
the function $w$ is symmetric. Then the largest eigenvalue $\lambda_G^{\max}$ 
of the Laplacian $\Delta_G$ is given by
\begin{equation*}
    \lambda_G^{\max} 
    = \sup_{g \in \ell^2(V) \setminus \{0\}} \frac{\ip{\Delta_G g}{g}}{\ip{g}{g}}
    = \sup_{g \in \ell^2(V) \setminus \{0\}} 
    \frac{\ip{\Delta_G (\sqrt{D} g)}{\sqrt{D} g}}{\ip{\sqrt{D} g}{\sqrt{D} g}},
\end{equation*}
where $\sqrt{D}$ is an invertible operator on $\ell^2(V)$ defined by
$(\sqrt{D} h)(i) = \sqrt{\vol(i)} h(i)$, for all $h \in \ell^2(V)$ and $i \in V$.
It is easy to check that
\begin{equation*}
    \lambda_G^{\max} 
    = \sup_{g \in \ell^2(V) \setminus \{0\}} 
    \frac{\sum_{i,j \in V} (g(i) - g(j))^2 w_{ij}}{2 \sum_{i,j \in V} g(i)^2 w_{ij}}.
\end{equation*}

The \emph{bipartiteness ratio} $\beta_G$ of the weighted graph $G$ is defined as follows.
\begin{equation*}
    \beta_G = \min_{\substack{A,B \subseteq V \\ A \cup B \neq \emptyset \\
    A \cap B = \emptyset}} \frac{2 e_G(A,A) + 2 e_G(B,B) + e_G(A \cup B, (A \cup B)^c)}
    {2 e_G(A \cup B,V)}.
\end{equation*}

Now given a weighted graph $G = (V, w)$, it can be viewed as the graphon, 
called the \emph{associated graphon} $W_G$ of $G$, defined as below. 
For each $1 \leq i < n$, denote the interval $[\frac{i-1}{n}, \frac{i}{n})$ by $P_i$, 
and $[\frac{n-1}{n}, 1]$ by $P_n$. Note that $\{P_i \times P_j : 1 \leq i,j \leq n\}$ 
forms a partition of $I^2$. For any $1 \leq i,j \leq n$ and $(x,y) \in P_i \times P_j$, 
define $W_G(x,y) \coloneq w_{ij}$.

We will show that the connectedness of $G$ implies the connectedness of $W_G$,
so that we can talk about the Laplacian and the bipartiteness ratio of $W_G$.

\begin{lemma}
    If $G = (V, w)$ is a connected weighted graph, then the associated graphon $W_G$ 
    of $G$ is also connected.
\end{lemma}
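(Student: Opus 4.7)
The plan is to reduce the connectedness of $W_G$ to a finite combinatorial statement that follows from the connectedness of $G$. Given any measurable $A \subseteq I$ with $0 < \mu_L(A) < 1$, I would define $\alpha_i \coloneq n \cdot \mu_L(A \cap P_i) \in [0,1]$ for each $i \in V$, so that $n \cdot \mu_L(A^c \cap P_j) = 1 - \alpha_j$. Since $W_G$ equals the constant $w_{ij}$ on each block $P_i \times P_j$ and these blocks partition $I^2$, decomposing the integral over the blocks gives the identity
\begin{equation*}
    \int_{A \times A^c} W_G = \frac{1}{n^2} \sum_{i,j=1}^{n} \alpha_i (1 - \alpha_j)\, w_{ij}.
\end{equation*}

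Every summand on the right is nonnegative, so it suffices to exhibit at least one strictly positive summand, i.e., a pair $(i,j)$ with $w_{ij} > 0$, $\alpha_i > 0$, and $\alpha_j < 1$. I would set $S_+ \coloneq \{i : \alpha_i > 0\}$ and $S_- \coloneq \{j : \alpha_j < 1\}$; the assumption $\mu_L(A) > 0$ makes $S_+$ nonempty, and $\mu_L(A) < 1$ makes $S_-$ nonempty. If every $\alpha_i$ lies in $\{0,1\}$, then $S_-^c$ is a nonempty proper subset of $V$ and $A$ agrees up to a null set with $\bigcup_{i \in S_-^c} P_i$; applying the connectedness of $G$ to $S_-^c$ directly yields an edge between $S_-^c \subseteq S_+$ and $(S_-^c)^c \subseteq S_-$, supplying the required positive summand. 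Otherwise some $\alpha_{i_0}$ lies strictly in $(0,1)$, so $i_0 \in S_+ \cap S_-$. A short case analysis, split according to whether $S_+$ and $S_-$ are proper subsets of $V$, combined with applying the connectedness of $G$ to whichever of $S_+$ or $S_-$ is proper and using the symmetry $w_{ij} = w_{ji}$ to swap endpoints when needed, produces the desired pair. In the degenerate case $S_+ = S_- = V$, any edge $(i,j)$ with $w_{ij} > 0$ (which exists because $G$ is connected and $n \geq 2$) works.

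The conceptual heart of the argument is the displayed identity, which cleanly converts an analytic condition on the graphon into a bilinear expression in the weight matrix and the vector $(\alpha_i)$; this reduction is routine given the piecewise-constant structure of $W_G$. The only mildly technical step is the case analysis above, where one must carefully track which endpoint of the chosen edge sits in $S_+$ versus $S_-$ and invoke symmetry of $w$ to swap them if the direction given by connectedness of $G$ is the wrong one; no substantive obstacle is expected beyond this bookkeeping.
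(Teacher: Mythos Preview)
Your proposal is correct and is essentially the paper's own argument: your $S_+$ and $S_-$ coincide with the paper's sets $S_1 = \{i : \mu_L(A \cap P_i) > 0\}$ and $S_2 = \{j : \mu_L(A^c \cap P_j) > 0\}$, and your displayed identity is exactly the block decomposition the paper uses to bound $\int_{A \times A^c} W_G$ below by $m_A\, e_G(S_1,S_2)$. The paper's case analysis for showing $e_G(S_1,S_2) > 0$ (split according to whether $S_1$ or $S_2$ equals $V$) is a slightly more streamlined version of yours, but the content is identical.
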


\begin{proof}
    Let $A$ be a measurable subset of $I$ with $0 < \mu_L(A) < 1$. Then the sets
    \begin{equation*}
        S_1 = \{i \in V : \mu_L(A \cap P_i) > 0\} \quad \text{and} \quad 
        S_2 = \{j \in V : \mu_L(A^c \cap P_j) > 0\}
    \end{equation*}
    are nonempty, and the inclusions $S_1^c \subseteq S_2$ and $S_2^c \subseteq S_1$
    hold. Further, observe that
    \begin{align*}
        \int_{A \times A^c} W 
        = \sum_{i,j \in V} \int_{(A \cap P_i) \times (A^c \cap P_j)} W 
        & = 
        \sum_{i \in S_1, j \in S_2} \int_{(A \cap P_i) \times (A^c \cap P_j)} w_{ij}
        \\
        & = 
        \sum_{i \in S_1, j \in S_2} \mu_L(A \cap P_i) \mu_L(A^c \cap P_j) w_{ij}
        \\
        & \geq
        m_A e_G(S_1,S_2),
    \end{align*} 
    where $m_A = \min\{\mu_L(A \cap P_i) \mu_L(A^c \cap P_j) : i \in S_1, j \in S_2\} > 0$.
    Now if both $S_1$ and $S_2$ are equal to $V$, then we have $e_G(S_1,S_2) \geq 
    e_G(\{1\},\{1\}^c) > 0$ as the graph $G$ is connected. Otherwise,
    if $S_1$ or $S_2$ is not $V$, then we get $e_G(S_1,S_2) \geq e_G(S_1,S_1^c) > 0$
    or $e_G(S_1,S_2) \geq e_G(S_2^c,S_2) > 0$, respectively. Thus, in any case,
    $m_A e_G(S_1,S_2)$ and hence $\int_{A \times A^c} W$ is positive, showing that
    the associated graphon $W_G$ is connected. 
\end{proof}

\subsection{Top of the spectrum of graphs and the associated graphons}

The arguments in the following lemma are similar to that in 
\cite{Abhishek-Mahan24}*{Section 4.2}.

\begin{lemma} \label{lemma:compare-lambdaG&WG}
    Given any loopless, connected weighted graph $G = (V,w)$, we have
    $\lambda_{W_G}^{\max} = \lambda_G^{\max}$.
\end{lemma}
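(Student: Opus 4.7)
The plan is to exploit the block structure of $W_G$: it is constant with value $w_{ij}$ on each cell $P_i \times P_j$, and $d_{W_G}$ is constant equal to $\vol(i)/n$ on each $P_i$. Let $S \subset L^2(I,\nu)$ denote the subspace of step functions that are constant on each $P_i$, and $S^\perp$ its orthogonal complement with respect to $\ip{\cdot}{\cdot}_v$. Since $d_{W_G}$ is constant on each $P_i$, a short computation shows that $S^\perp = \{f \in L^2(I,\nu) : \int_{P_i} f = 0 \text{ for every } i \in V\}$. I will bound the Rayleigh quotient of $\Delta_{W_G}$ by decomposing an arbitrary $L^2$ function along $L^2(I,\nu) = S \oplus S^\perp$.

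For the direction $\lambda_{W_G}^{\max} \geq \lambda_G^{\max}$, given any $g \in \ell^2(V)$ I plug the test function $f(x) := g(i)$ for $x \in P_i$ into the variational formula for $\lambda_{W_G}^{\max}$. By block-constancy of $W_G$ and $\mu_L(P_i) = 1/n$, both numerator and denominator of the Rayleigh quotient pick up a common factor $1/n^2$, and the quotient reduces to $\frac{\sum_{i,j}(g(i)-g(j))^2 w_{ij}}{2\sum_{i,j} g(i)^2 w_{ij}}$, which is exactly the graph Rayleigh quotient at $g$. Taking the supremum over $g$ gives the inequality.

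For the reverse, given arbitrary $f \in L^2(I,\nu)$, decompose $f = \bar{f} + f_0$ with $\bar{f}(x) := n\int_{P_i} f$ on $P_i$ (so $\bar{f} \in S$) and $f_0 := f - \bar{f} \in S^\perp$. Since $W_G$ is block-constant on $P_i \times P_j$ and $\int_{P_i} f_0 = 0$ for every $i$, every cross term of the form $\int_{P_i}\int_{P_j}(\bar{f}(x)-\bar{f}(y))(f_0(x)-f_0(y)) W_G$ or $\int_{P_i}\int_{P_j}\bar{f}(x)f_0(x) W_G$ vanishes after integrating the $f_0$-factor over a single block. This yields the clean splitting
\[
\frac{\int_0^1 \int_0^1 (f(x)-f(y))^2 W_G(x,y)\,\mathrm{d}x\,\mathrm{d}y}{2 \int_0^1 \int_0^1 f(x)^2 W_G(x,y)\,\mathrm{d}x\,\mathrm{d}y} = \frac{N_{\bar{f}} + N_{f_0}}{2(D_{\bar{f}} + D_{f_0})},
\]
where the $N$'s and $D$'s denote the corresponding contributions. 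The ratio $N_{\bar{f}}/(2D_{\bar{f}})$ is the graph Rayleigh quotient at $g(i) := \bar{f}|_{P_i}$, hence $\leq \lambda_G^{\max}$; expanding $N_{f_0}$ blockwise and using $\int_{P_i} f_0 = 0$ together with looplessness $w_{ii}=0$ gives $N_{f_0} = 2D_{f_0}$, so $N_{f_0}/(2D_{f_0}) = 1$. The full Rayleigh quotient at $f$ is a weighted mean of the two contributions, hence at most $\max(\lambda_G^{\max},1)$.

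The main obstacle is then to show $\lambda_G^{\max} \geq 1$, so that the above bound collapses to $\lambda_G^{\max}$. This is where looplessness is essential once more: every diagonal entry of $\Delta_G$ equals $1 - w_{ii}/\vol(i) = 1$, so $\operatorname{tr}(\Delta_G) = n$. Since $\Delta_G$ is positive semi-definite with $0$ as an eigenvalue (it annihilates the vector $i \mapsto \sqrt{\vol(i)}$), its remaining $n-1$ eigenvalues are non-negative and sum to $n$, forcing $\lambda_G^{\max} \geq n/(n-1) > 1$ as $n \geq 2$. Combined with the lower bound from the second paragraph, this yields $\lambda_{W_G}^{\max} = \lambda_G^{\max}$.
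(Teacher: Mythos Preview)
Your proof is correct and takes a genuinely different route from the paper's. Both arguments handle the easy direction $\lambda_{W_G}^{\max} \geq \lambda_G^{\max}$ identically, by plugging step functions into the Rayleigh quotient. For the reverse inequality, the paper works with the form $\norm[e]{df}^2/\norm[v]{f}^2 = 1 - \int f(x)f(y)W_G/\int f(x)^2 W_G$: it observes that the numerator $\int f(x)f(y)W_G$ equals $\sum_{i,j} F(i)F(j)w_{ij}$ exactly (with $F(i)=\int_{P_i}f$), bounds $\sum_{i,j}F(i)^2 w_{ij} \leq \int f(x)^2 W_G$ via Cauchy--Schwarz, and then invokes $\lambda_G^{\max}\geq 1$ (cited from Chung) to push the inequality through. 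Your argument is more structural: the orthogonal splitting $L^2(I,\nu)=S\oplus S^\perp$ diagonalizes $\Delta_{W_G}$, because $T_{W_G}$ annihilates $S^\perp$ (each $\int_{P_j}f_0=0$) so $\Delta_{W_G}$ acts as the identity there, while on $S$ it is isometric to $\Delta_G$. This makes the equality of spectra conceptually transparent and avoids the Cauchy--Schwarz step entirely. Your trace argument for $\lambda_G^{\max}\geq n/(n-1)>1$ is also self-contained, whereas the paper outsources this fact. One small remark: looplessness is not actually needed for the identity $N_{f_0}=2D_{f_0}$ (the cross term $\int_{P_i}f_0\cdot\int_{P_j}f_0$ vanishes regardless); it enters only through the trace computation, exactly as in the paper's use of it.
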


\begin{proof}
    Let $\map{g}{V}{\mathbb{R}}$ be any nonzero function. It gives rise to 
    a nonzero function $g' \in L^{\infty}(I)$, defined for any $x \in P_i$
    with $1 \leq i \leq n$, by $g'(x) = g(i)$, that satisfies
    \begin{align*}
        \lambda_{W_G}^{\max}
        & \geq 
        \frac{\int_{0}^{1} \int_{0}^{1} (g'(x) - g'(y))^2 W_G(x,y)\, \mathrm{d}x\, \mathrm{d}y}
        {2 \int_{0}^{1} \int_{0}^{1} g'(x)^2 W_G(x,y)\,\mathrm{d}x\, \mathrm{d}y}
        \\
        & =
        \frac{\sum_{i,j \in V} \int_{P_i \times P_j} (g'(x) - g'(y))^2 W_G(x,y)\, 
        \mathrm{d}x\, \mathrm{d}y}{2 \sum_{i,j \in V} \int_{P_i \times P_j}
        g'(x)^2 W_G(x,y)\,\mathrm{d}x\, \mathrm{d}y}
        \\
        & = 
        \frac{\sum_{i,j \in V} (g(i) - g(j))^2 w_{ij}}{2 \sum_{i,j \in V} g(i)^2 w_{ij}}.
    \end{align*}  
    Hence, we get the inequality $\lambda_{W_G}^{\max} \geq \lambda_G^{\max}$.
    
    On the other hand, given a nonzero function $f \in L^{\infty}(I)$, define 
    the function $\map{F}{V}{\mathbb{R}}$ by $F(i) = \int_{P_i} f(x)\, \mathrm{d}x$,
    for every $i \in V$.
    Then the definition of $\lambda_G^{\max}$ gives the inequality
    \begin{equation*}
        \frac{1}{2} \sum_{i,j \in V} (F(i) - F(j))^2 w_{ij} 
        \leq \lambda_G^{\max} \sum_{i,j \in V} F(i)^2 w_{ij},
    \end{equation*}
    that is,
    \begin{equation*}
        \sum_{i,j \in V} F(i)^2 w_{ij} - \sum_{i,j \in V} F(i) F(j) w_{ij}
        \leq \lambda_G^{\max} \sum_{i,j \in V} F(i)^2 w_{ij},
    \end{equation*}
    and hence, we have
    \begin{equation} \label{ineq:largestEVforG}
        - \sum_{i,j \in V} F(i) F(j) w_{ij} 
        \leq (\lambda_G^{\max} - 1) \sum_{i,j \in V} F(i)^2 w_{ij}.
    \end{equation}
    Now note that
    \begin{align*}
        \sum_{i,j \in V} F(i) F(j) w_{ij} 
        & =
        \sum_{i,j \in V} \left(\int_{P_i} f(x)\, \mathrm{d}x\right) 
        \left(\int_{P_j} f(y)\, \mathrm{d}y\right) w_{ij}
        \\
        & =
        \sum_{i,j \in V} \int_{P_i \times P_j} f(x) f(y) W_G(x,y)\, \mathrm{d}x\, \mathrm{d}y
        \\
        & =
        \int_{0}^{1} \int_{0}^{1} f(x) f(y) W_G(x,y)\, \mathrm{d}x\, \mathrm{d}y,
    \end{align*}
    and that
    \begin{align*}
        \sum_{i,j \in V} F(i)^2 w_{ij}
        & =
        \sum_{i,j \in V} \left(\int_{P_i} f(x)\, \mathrm{d}x\right)^2 w_{ij}
        \\
        & \leq
        \frac{1}{n} \sum_{i,j \in V} \left(\int_{P_i} f(x)^2\, \mathrm{d}x\right) w_{ij}
        \tag{using the Cauchy--Schwarz inequality}
        \\
        & =
        \sum_{i,j \in V} \left(\int_{P_i} f(x)^2\, \mathrm{d}x\right) 
        \left(\int_{P_j} 1\, \mathrm{d}y\right) w_{ij}
        \\
        & =
        \sum_{i,j \in V} \int_{P_i \times P_j} f(x)^2 W_G(x,y)\, \mathrm{d}x\, \mathrm{d}y
        \\
        & =
        \int_{0}^{1} \int_{0}^{1} f(x)^2 W_G(x,y)\, \mathrm{d}x\, \mathrm{d}y.
    \end{align*}
    Thus, using the fact that the largest eigenvalue of the Laplacian of  a loopless graph
    is $\geq 1$ \cite{Chung-SpectralGraphTh97}*{Lemma 1.7(ii)}, 
    \eqref{ineq:largestEVforG} becomes
    \begin{equation*}
        - \int_{0}^{1} \int_{0}^{1} f(x) f(y) W_G(x,y)\, \mathrm{d}x\, \mathrm{d}y
        \leq (\lambda_G^{\max} - 1) 
        \int_{0}^{1} \int_{0}^{1} f(x)^2 W_G(x,y)\, \mathrm{d}x\, \mathrm{d}y,
    \end{equation*}
    which implies
    \begin{equation*}
        \frac{\norm[e]{df}^2}{\norm[v]{f}^2}
        = 1 - \frac{\int_{0}^{1} \int_{0}^{1} f(x) f(y) W_G(x,y)\, \mathrm{d}x\, \mathrm{d}y}
        {\int_{0}^{1} \int_{0}^{1} f(x)^2 W_G(x,y)\, \mathrm{d}x\, \mathrm{d}y}
        \leq 1 + (\lambda_G^{\max} - 1)  = \lambda_G^{\max}.
    \end{equation*}
    This proves that $\lambda_{W_G}^{\max} \leq \lambda_G^{\max}$, as desired.
\end{proof}

\begin{remark}
    Combining \cref{lemma:bipartiteGraphonChar} and \cref{lemma:compare-lambdaG&WG} 
    with the fact that a connected graph is bipartite if and only if the largest eigenvalue
    of its Laplacian is $2$, we conclude that a connected graph is bipartite
    if and only if its associated graphon is bipartite.
\end{remark}

\subsection{Bipartiteness ratio of graphs and the associated graphons}

Let $G = (V,w)$ be a connected weighted graph. Recall that $V = \{1, \dots, n\}$
with $n \geq 2$. We now obtain a characterization for the bipartite ratio $\beta_{W_G}$
of the associated graphon $W_G$ of $G$ in terms of certain elements of $I^n$, 
analogous to the notion of the fractional Cheeger constant introduced by 
Khetan and Mj \cite{Abhishek-Mahan24}.

For every $\alpha = (\alpha_1, \dots, \alpha_n), \gamma = (\gamma_1, \dots, \gamma_n)
\in I^n$ with $0 < \alpha + \gamma \leq 1$, where $0 = (0, \dots, 0), 
1 = (1, \dots, 1) \in I^n$, define
\begin{equation*}
    \tilde \beta_G(\alpha,\gamma) \coloneq 
    \frac{\sum_{i,j \in V} [2 \alpha_i \alpha_j + 2 \gamma_i \gamma_j 
    + (\alpha_i + \gamma_i)(1 - (\alpha_j + \gamma_j))] w_{ij}}
    {2 \sum_{i,j \in V} (\alpha_i + \gamma_i) w_{ij}}.
\end{equation*}
Note that $\sum_{i,j \in V} (\alpha_i + \gamma_i) w_{ij} 
= \sum_{i,j \in V} (\alpha_i + \gamma_i) \vol(i)$, which is positive, 
since $\vol(i) > 0$ for all $i$.

\begin{lemma} \label{lemma:beta-WG-tilde}
    Given a connected weighted graph  $G = (V,w)$, we have
    \begin{equation} \label{eq:beta-WG-tilde}
        \beta_{W_G} = \inf_{\substack{\alpha, \gamma \in I^n \\ 0 < \alpha + \gamma \leq 1}} 
        \tilde \beta_G(\alpha,\gamma).
    \end{equation}
\end{lemma}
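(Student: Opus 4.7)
The plan is to set up a value-preserving correspondence between the admissible pairs $(L, R)$ of disjoint measurable subsets appearing in the definition of $\beta_{W_G}$ and the admissible pairs $(\alpha, \gamma) \in I^n \times I^n$ with $0 < \alpha + \gamma \leq 1$ appearing in $\tilde \beta_G$, and then take the infimum on both sides. The key observation is that $W_G$ equals the constant $w_{ij}$ on each block $P_i \times P_j$, so every integral against $W_G$ on a rectangle depends on $L$ and $R$ only through the numbers $\mu_L(L \cap P_i)$ and $\mu_L(R \cap P_i)$.

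For the inequality $\beta_{W_G} \geq \inf \tilde \beta_G(\alpha,\gamma)$, given admissible disjoint measurable $L, R \subseteq I$ with $\mu_L(L \cup R) > 0$, I would set $\alpha_i \coloneq n\, \mu_L(L \cap P_i)$ and $\gamma_i \coloneq n\, \mu_L(R \cap P_i)$ for each $i \in V$. Since $\mu_L(P_i) = 1/n$ and $L \cap R = \emptyset$, these vectors lie in $I^n$ with $\alpha_i + \gamma_i \leq 1$ componentwise, and the hypothesis $\mu_L(L \cup R) = \frac{1}{n}\sum_i (\alpha_i + \gamma_i) > 0$ rules out $\alpha + \gamma = 0$. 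A direct block-wise computation then gives $\eta(L \times L) = \frac{1}{n^2} \sum_{i,j \in V} \alpha_i \alpha_j w_{ij}$ and analogous expressions for $\eta(R \times R)$, $\eta((L \cup R) \times (L \cup R)^c)$, and $\eta((L \cup R) \times I)$, using $\mu_L((L \cup R)^c \cap P_j) = (1 - \alpha_j - \gamma_j)/n$. The common factor $1/n^2$ cancels in the ratio, yielding $\beta_{W_G}(L,R) = \tilde \beta_G(\alpha,\gamma)$.

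For the reverse inequality, given any admissible pair $(\alpha, \gamma) \in I^n \times I^n$ with $0 < \alpha + \gamma \leq 1$, I would choose disjoint measurable subsets $L_i, R_i \subseteq P_i$ with $\mu_L(L_i) = \alpha_i/n$ and $\mu_L(R_i) = \gamma_i/n$ (for instance, two adjacent subintervals of $P_i$); the constraint $\alpha_i + \gamma_i \leq 1$ translates precisely to $\mu_L(L_i) + \mu_L(R_i) \leq \mu_L(P_i) = 1/n$, so such a choice is always possible. Setting $L = \bigcup_i L_i$ and $R = \bigcup_i R_i$ then produces a pair admissible in the $\beta_{W_G}$ infimum, since $\mu_L(L \cup R) = \frac{1}{n} \sum_i (\alpha_i + \gamma_i) > 0$, and the same block-wise computation gives $\beta_{W_G}(L, R) = \tilde \beta_G(\alpha, \gamma)$. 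Taking the infimum on both sides completes the equality.

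There is no real obstacle in this argument; the entire proof reduces to bookkeeping made possible by the piecewise-constant structure of $W_G$ on the partition $\{P_i \times P_j\}$ of $I^2$. The only point requiring slight care is matching the admissibility conditions $\mu_L(L \cup R) > 0$ in the graphon setting and $0 < \alpha + \gamma \leq 1$ in the discrete setting, and verifying these are preserved under the correspondence above.
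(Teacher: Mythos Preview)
Your proposal is correct and follows essentially the same approach as the paper: both arguments show that the two index sets produce exactly the same set of values by passing from $(L,R)$ to $(\alpha,\gamma)$ via $\alpha_i = n\,\mu_L(L\cap P_i)$, $\gamma_i = n\,\mu_L(R\cap P_i)$, and in the reverse direction by carving out subintervals of each $P_i$ of the prescribed lengths. The paper writes down the explicit intervals $L = \bigcup_i \bigl(\tfrac{i-1}{n},\tfrac{i-1+\alpha_i}{n}\bigr)$ and $R = \bigcup_j \bigl(\tfrac{j-\gamma_j}{n},\tfrac{j}{n}\bigr)$, but this is just a concrete instance of your ``two adjacent subintervals'' choice; the block-wise computation and the matching of admissibility conditions are identical.
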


\begin{proof}
    It suffices to prove that the sets
    \begin{equation*}
        A = \{\tilde \beta_G(\alpha,\gamma) : \alpha, \gamma \in I^n, 
        0 < \alpha + \gamma \leq 1\},
    \end{equation*}
    and 
    \begin{equation*}
        B = \{\beta_{W_G}(L,R) : L, R \text{ are measurable subsets of } I,  
        L \cap R = \emptyset, \mu_L(L \cup R) > 0\}
    \end{equation*}
    are equal. Let $\alpha = (\alpha_1, \dots, \alpha_n), 
    \gamma = (\gamma_1, \dots, \gamma_n)$ be elements of $I^n$ with 
    $0 < \alpha + \gamma \leq 1$. Then, observe that the sets 
    \begin{equation*}
        L = \bigcup_{i \in V} \left(\frac{i-1}{n}, \frac{i - 1 + \alpha_i}{n}\right) 
        \quad \text{and} \quad 
        R = \bigcup_{j \in V} \left(\frac{j - \gamma_j}{n}, \frac{j}{n}\right)
    \end{equation*} 
    are disjoint measurable subsets of $I$ and $\mu_L(L \cup R) > 0$, and thus, 
    $\beta_{W_G}(L,R)$ lies in the set $B$. We will show that $\beta_{W_G}(L,R)
    = \tilde \beta_G(\alpha, \gamma)$, so that we can conclude that 
    $\tilde \beta_G(\alpha, \gamma)$ also belongs to the set $B$. For that, note that
    \begin{align*}
        & \hspace{0.6cm}
        2 \eta(L \times L) + 2 \eta(R \times R) + \eta((L \cup R) \times (L \cup R)^c)
        \\
        & =
        \sum_{i,j \in V} [2 \mu_L(L \cap P_i) \mu_L(L \cap P_j) 
        + 2 \mu_L(R \cap P_i) \mu_L(R \cap P_j)
        \\
        & \quad + 
        \mu_L((L \cup R) \cap P_i) \mu_L((L \cup R)^c \cap P_j)] w_{ij}
        \\
        & =
        \sum_{i,j \in V} \left[2 \frac{\alpha_i}{n} \frac{\alpha_j}{n}
        + 2 \frac{\gamma_i}{n} \frac{\gamma_j}{n} + \left(\frac{\alpha_i + \gamma_i}{n}\right)
        \left(\frac{1 - (\alpha_j + \gamma_j)}{n}\right)\right] w_{ij}
        \\
        & =
        \frac{1}{n^2} \sum_{i,j \in V} [2 \alpha_i \alpha_j + 2 \gamma_i \gamma_j 
        + (\alpha_i + \gamma_i)(1 - (\alpha_j + \gamma_j))] w_{ij},
    \end{align*}
    and that 
    \begin{align*}
        2 \eta((L \cup R) \times I) 
        & = 
        2 \sum_{i,j \in V} \mu_L((L \cup R) \cap P_i) \mu_L(P_j) w_{ij}
        \\
        & =
        \frac{2}{n} \sum_{i,j \in V} \left(\frac{\alpha_i + \gamma_i}{n}\right) w_{ij}
        \\
        & =
        \frac{2}{n^2} \sum_{i,j \in V} (\alpha_i + \gamma_i) w_{ij}.
    \end{align*}
    Combining the above two equations gives us that 
    $\beta_{W_G}(L,R) = \tilde \beta_G(\alpha, \gamma)$, and this proves that 
    $A$ is a subset of $B$. To obtain the other inclusion, start with disjoint
    measurable subsets $L$ and $R$ of $I$ with $\mu_L(L \cup R) > 0$, and set
    $\alpha_i = n \mu_L(L \cap P_i)$ and $\beta_j = n \mu_L(R \cap P_j)$, for every 
    $i,j \in V$. Then the above calculations show that the elements 
    $\alpha = (\alpha_1, \dots, \alpha_n), \gamma = (\gamma_1, \dots, \gamma_n)$ 
    of $I^n$ are such that $0 < \alpha + \gamma \leq 1$ and $\beta_{W_G}(L,R)
    = \tilde \beta_G(\alpha, \gamma)$, implying that $B$ is a subset of $A$.
\end{proof}

From the arguments similar to that in the proof of \cref{lemma:beta-WG-tilde}, 
and the fact that $e_G(A,B) = n^2 \eta(\bigcup_{i \in A, j \in B} P_i \times P_j)$, 
for all subsets $A,B$ of $V$, it follows that
\begin{equation} \label{eq:betaG&betaG-tilde}
    \beta_G 
    = \min_{{\substack{\alpha, \gamma \in \{0,1\}^n \\ 0 < \alpha + \gamma \leq 1}}}
    \tilde \beta_G(\alpha,\gamma).
\end{equation}

The next lemma, which is similar to \cite{Abhishek-Mahan24}*{Lemma 4.1}, 
shows that the infimum in \cref{eq:beta-WG-tilde} is attained.

\begin{lemma} \label{lemma:betaWG-attained}
    Given any connected weighted graph $G = (V,w)$, there exist elements 
    $\alpha, \gamma$ of $I^n$ with $0 < \alpha + \gamma \leq 1$ such that
    $\beta_{W_G} = \tilde \beta_G(\alpha,\gamma)$.
\end{lemma}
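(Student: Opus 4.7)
The plan is to exploit compactness. The domain
$D = \{(\alpha, \gamma) \in I^n \times I^n : 0 < \alpha + \gamma \leq 1\}$
fails to be compact only near the origin, where the denominator of
$\tilde{\beta}_G$ vanishes, so my strategy is to extend $\tilde{\beta}_G$
continuously to the compact closure
$\overline{D} = \{(\alpha, \gamma) \in I^n \times I^n : \alpha + \gamma \leq 1\}$
by setting its value at $(0, 0)$ equal to $\frac{1}{2}$. A minimizer of the
extension will then exist in $\overline{D}$, and I will argue that it either
already lies in $D$ or can be replaced by an explicit minimizer in $D$.

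First, I would rewrite, for $(\alpha, \gamma) \in D$,
\begin{equation*}
    \tilde{\beta}_G(\alpha, \gamma) - \frac{1}{2}
    = \frac{2 P(\alpha) + 2 Q(\gamma) - S(\alpha + \gamma)}{2 M(\alpha + \gamma)},
\end{equation*}
where $P(\alpha) = \sum_{i,j} \alpha_i \alpha_j w_{ij}$,
$Q(\gamma) = \sum_{i,j} \gamma_i \gamma_j w_{ij}$,
$S(\alpha+\gamma) = \sum_{i,j} (\alpha_i + \gamma_i)(\alpha_j + \gamma_j) w_{ij}$,
and $M(\alpha + \gamma) = \sum_{i,j} (\alpha_i + \gamma_i) w_{ij}
= \sum_{i} (\alpha_i + \gamma_i) \vol(i)$; this follows from expanding the factor
$1 - (\alpha_j + \gamma_j)$ in the definition. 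Each of $P, Q, S$ is bounded above by
$\max_i (\alpha_i + \gamma_i) \cdot M(\alpha + \gamma)$; for instance
$P(\alpha) \leq \max_i \alpha_i \cdot \sum_j \alpha_j \vol(j)
\leq \max_i (\alpha_i + \gamma_i) \cdot M(\alpha + \gamma)$, and similarly
for the other two. This yields
$\abs{\tilde{\beta}_G(\alpha, \gamma) - \frac{1}{2}}
\leq \frac{5}{2} \max_i (\alpha_i + \gamma_i)$,
so $\tilde{\beta}_G \to \frac{1}{2}$ as $(\alpha, \gamma) \to (0, 0)$,
and the extension by $\tilde{\beta}_G(0, 0) = \frac{1}{2}$ is continuous on
$\overline{D}$.

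Being continuous on the compact set $\overline{D}$, the extension attains its
minimum at some $(\alpha^*, \gamma^*) \in \overline{D}$, and this minimum equals
$\min\bigl(\beta_{W_G}, \frac{1}{2}\bigr) = \beta_{W_G}$ by
\cref{lemma:beta-leq-half}. If $(\alpha^*, \gamma^*) \neq (0, 0)$, it lies in $D$
and realizes $\beta_{W_G}$, completing the proof. Otherwise
$\beta_{W_G} = \frac{1}{2}$, and it suffices to observe that
$\alpha = \gamma = \bigl(\frac{1}{2}, \dots, \frac{1}{2}\bigr)$ satisfies
$0 < \alpha + \gamma \leq 1$ and, by direct substitution into the definition,
$\tilde{\beta}_G(\alpha, \gamma) = \frac{1}{2} = \beta_{W_G}$.
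The only delicate step is the continuity of the extension at the origin,
which reduces to dominating the quadratic forms $P, Q, S$ by the linear quantity
$M$; this is the main (though essentially routine) obstacle.
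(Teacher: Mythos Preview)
Your proof is correct and follows essentially the same strategy as the paper: both exploit compactness of $I^n \times I^n$, control $\tilde{\beta}_G$ near the origin (where it is close to $\tfrac{1}{2}$), and fall back on the explicit point $\alpha = \gamma = \bigl(\tfrac{1}{2}, \dots, \tfrac{1}{2}\bigr)$ when $\beta_{W_G} = \tfrac{1}{2}$. Your packaging via a continuous extension to $\overline{D}$ is a bit cleaner than the paper's sequential-compactness-plus-contradiction argument, but the mathematical content is the same.
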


\begin{proof}
    If $\beta_{W_G} = \frac{1}{2}$, then we have 
    $\beta_{W_G} = \tilde \beta_G(\alpha,\gamma)$ for 
    $\alpha = \gamma = \left(\frac{1}{2}, \dots, \frac{1}{2}\right) \in I^n$.

    Now assume that $\beta_{W_G} \neq \frac{1}{2}$, that is, $\beta_{W_G} < \frac{1}{2}$,
    by \cref{lemma:beta-leq-half}. Then, using \cref{lemma:beta-WG-tilde}, 
    given any positive integer $k$, there exist elements 
    $\alpha^{(k)} = (\alpha_1^{(k)}, \dots, \alpha_n^{(k)}), 
    \gamma^{(k)} = (\gamma_1^{(k)}, \dots, \gamma_n^{(k)})$ of $I^n$ with 
    $0 < \alpha^{(k)} + \gamma^{(k)} \leq 1$ such that
    \begin{equation} \label{ineq:seqI2n-betaWG}
        \beta_{W_G} \leq \tilde \beta_G(\alpha^{(k)},\gamma^{(k)}) 
        < \beta_{W_G} + \frac{1}{k}.
    \end{equation}
    Since $I^n$ is compact, the sequences $(\alpha^{(k)})$ and $(\gamma^{(k)})$ 
    have convergent subsequences in $I^n$, which we again denote by 
    $(\alpha^{(k)})$ and $(\gamma^{(k)})$, respectively, abusing the notation. 
    Suppose they converge to $\alpha$ and $\gamma$, respectively, in $I^n$. 
    Then for all $i,j \in V$, the sequences $(\alpha_i^{(k)})$ and $(\gamma_j^{(k)})$ 
    converge to $\alpha_i$ and $\gamma_j$, respectively, in $I$. 
    Consequently, if $(1 \geq)\, \alpha + \gamma > 0$, then the sequence
    $\left(\tilde \beta_G(\alpha^{(k)},\gamma^{(k)})\right)$ converges to 
    $\tilde \beta_G(\alpha,\gamma)$ in $\mathbb{R}$. Then, using \cref{ineq:seqI2n-betaWG},
    it follows that $\beta_{W_G} = \tilde \beta_G(\alpha,\gamma)$. 
    We now show that the case $\alpha + \gamma = 0$ is not possible. 
    
    If $\alpha + \gamma$ is $0$, then there is a positive integer $N$ such that 
    for all $k \geq N$ and $j \in V$, we have $\alpha_j^{(k)} + \gamma_j^{(k)} < \delta$, 
    where $\delta = \frac{1}{2} - \beta_{W_G}$. Hence, for all $k \geq N$, we get
    \begin{align*}
        & \hspace{0.6cm}
        \sum_{i,j \in V} [2 \alpha_i^{(k)} \alpha_j^{(k)} + 2 \gamma_i^{(k)} \gamma_j^{(k)} 
        + (\alpha_i^{(k)} + \gamma_i^{(k)})(1 - (\alpha_j^{(k)} + \gamma_j^{(k)}))] w_{ij}
        \\
        & \geq
        \sum_{i,j \in V} (\alpha_i^{(k)} 
        + \gamma_i^{(k)})(1 - (\alpha_j^{(k)} + \gamma_j^{(k)})) w_{ij}
        \\
        & \geq
        (1 - \delta) \sum_{i,j \in V} (\alpha_i^{(k)} + \gamma_i^{(k)}) w_{ij},
    \end{align*}
    which implies that 
    $\tilde \beta_G(\alpha^{(k)}, \gamma^{(k)}) \geq \frac{1 -\delta}{2}$, for all $k \geq N$.
    Combining this with \cref{ineq:seqI2n-betaWG} gives
    \begin{equation*}
        \frac{1 -\delta}{2} = \frac{1}{4} + \frac{\beta_{W_G}}{2} 
        < \beta_{W_G} + \frac{1}{k},
    \end{equation*}
    equivalently, $k < \frac{4}{1 - 2 \beta_{W_G}}$ for all $k \geq N$, which is impossible.
\end{proof}

We compare the bipartiteness ratios of graphs and the associated graphons 
in the following lemma, using certain ``suitable'' random variables. 
The analogous result for the Cheeger constants is discussed in \cite{Abhishek-Mahan24}*{Lemma 4.4}.

\begin{lemma} \label{lemma:compare-betaG&WG}
    For every loopless, connected weighted graph $G = (V,w)$, the following inequality holds.
    \begin{equation*}
        \frac{1}{4} \beta_{G} \leq \beta_{W_G} \leq \beta_G.
    \end{equation*}
\end{lemma}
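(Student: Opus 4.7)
The right-hand inequality $\beta_{W_G} \leq \beta_G$ is immediate from \cref{lemma:beta-WG-tilde} combined with \eqref{eq:betaG&betaG-tilde}: the infimum defining $\beta_{W_G}$ is taken over the larger set $I^n \times I^n$, while on the sub-collection $\{0,1\}^n \times \{0,1\}^n$ the quantity $\tilde\beta_G(\alpha,\gamma)$ coincides with the corresponding $\beta_G(A,B)$, the minimum of which is $\beta_G$.

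For the lower bound, the plan is a randomized rounding argument. By \cref{lemma:betaWG-attained}, choose $\alpha^{\ast}, \gamma^{\ast} \in I^n$ with $0 < \alpha^{\ast} + \gamma^{\ast} \leq 1$ and $\tilde\beta_G(\alpha^{\ast},\gamma^{\ast}) = \beta_{W_G}$. Independently over $i \in V$, place $i$ in a random set $A$ with probability $\alpha_i^{\ast}$, in a random set $B$ with probability $\gamma_i^{\ast}$, and in neither otherwise; since $\alpha_i^{\ast}+\gamma_i^{\ast}\leq 1$ this is a valid assignment, and $A,B$ are automatically disjoint. Writing
\begin{equation*}
    N = 2 e_G(A,A) + 2 e_G(B,B) + e_G(A \cup B, (A \cup B)^c), \qquad D = 2 e_G(A \cup B, V),
\end{equation*}
a direct second-moment computation, in which the looplessness of $G$ enforces $w_{ii} = 0$ and thereby absorbs the discrepancy between $\mathbb{E}[1_A(i)^2] = \alpha_i^{\ast}$ and $(\alpha_i^{\ast})^2$ on the diagonal, yields the identity $\mathbb{E}[N] = \tilde\beta_G(\alpha^{\ast},\gamma^{\ast}) \cdot \mathbb{E}[D]$.

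An averaging argument in the spirit of \cref{lemma:f/g&intf/intg}, applied to this discrete probability space, then extracts an outcome $(A,B)$ with $A \cup B \neq \emptyset$ and $N/D \leq \tilde\beta_G(\alpha^{\ast},\gamma^{\ast})$: if every outcome with $D > 0$ satisfied $N > \tilde\beta_G D$, then since $D = 0$ forces $A \cup B = \emptyset$ and hence $N = 0$, integrating would yield $\mathbb{E}[N - \tilde\beta_G D] > 0$, contradicting the identity. Such an outcome produces admissible disjoint sets witnessing $\beta_G(A,B) \leq \beta_{W_G}$, giving $\beta_G \leq \beta_{W_G}$ and a fortiori the stated $\tfrac{1}{4}\beta_G \leq \beta_{W_G}$. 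The main technical care is the expectation computation just described; the only other subtlety is discarding outcomes with $A \cup B = \emptyset$, which is automatic because $N = 0$ on them so they never witness a violation of the averaging inequality.
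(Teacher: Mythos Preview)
Your argument is correct and in fact establishes the sharper statement $\beta_G = \beta_{W_G}$, which is strictly stronger than the lemma as stated. The upper bound is handled exactly as in the paper. For the lower bound, the paper and you both start from the minimiser $(\alpha^{\ast},\gamma^{\ast})$ provided by \cref{lemma:betaWG-attained} and use a randomised rounding, but the couplings differ in an essential way. The paper draws \emph{independent} Bernoulli variables $L_i \sim \mathrm{Ber}(\alpha_i^{\ast})$ and $R_i \sim \mathrm{Ber}(\gamma_i^{\ast})$; this permits outcomes with $L_i(\omega) = R_i(\omega) = 1$, so the resulting ``sets'' need not be disjoint, and the repair step (replacing $R_i$ by $0$ on the overlap $S$) is what produces the factor $\tfrac14$ in the pointwise bound $X(\omega) \geq \tfrac14 \beta_G\, Y(\omega)$. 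Your trinomial rounding, which places each $i$ into $A$, $B$, or neither with probabilities $\alpha_i^{\ast}$, $\gamma_i^{\ast}$, $1-\alpha_i^{\ast}-\gamma_i^{\ast}$, guarantees disjointness from the outset, so every outcome with $D>0$ is already admissible and satisfies $N(\omega) \geq \beta_G\, D(\omega)$; equivalently, the averaging yields an outcome with $N/D \leq \beta_{W_G}$, whence $\beta_G \leq \beta_{W_G}$. The expectation identity $\mathbb{E}[N] = \beta_{W_G}\, \mathbb{E}[D]$ goes through because looplessness kills all diagonal terms $w_{ii}$, so the discrepancy between $\mathbb{E}[1_A(i)^2] = \alpha_i^{\ast}$ and $(\alpha_i^{\ast})^2$ never enters---exactly as you note. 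Your handling of the event $\{D=0\}$ is also sound: there $A\cup B = \emptyset$ (since each $\mathrm{vol}(i)>0$), hence $N=0$, and the set $\{D>0\}$ has positive probability because $\alpha^{\ast}+\gamma^{\ast}>0$. In short, your coupling is tighter than the paper's and removes the factor $4$ entirely; the paper's approach, modelled on the analogous Cheeger-constant comparison of Khetan--Mj, carries over an artefact that is unnecessary for the bipartiteness ratio.
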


\begin{proof}
    It is clear from \cref{lemma:beta-WG-tilde} and \cref{eq:betaG&betaG-tilde} 
    that $\beta_{W_G} \leq \beta_G$. We proceed to prove the other inequality.
    Let $\alpha = (\alpha_1, \dots, \alpha_n)$ and $\gamma = (\gamma_1, \dots, \gamma_n)$ 
    be elements of $I^n$ with $0 < \alpha + \gamma \leq 1$ satisfying 
    $\beta_{W_G} = \tilde \beta_G(\alpha, \gamma)$. 
    The existence of such elements is guaranteed by \cref{lemma:betaWG-attained}.

    Let $L_1, \dots, L_n$ and $R_1, \dots, R_n$ be independent random variables
    on some probability space $(\Omega, \mathcal{A}, P)$ such that 
    $P(L_i^{-1}(1)) = \alpha_i$, $P(L_i^{-1}(0)) = 1 - \alpha_i$, 
    $P(R_i^{-1}(1)) = \gamma_i$, and $P(R_i^{-1}(0)) = 1 - \gamma_i$, for all $1 \leq i \leq n$.
    Define random variables $X$ and $Y$ as follows.
    \begin{equation*}
        X = \sum_{i,j \in V} [2 L_i L_j + 2 R_i R_j + (L_i + R_i)(1 - L_j - R_j)] w_{ij},
    \end{equation*}   
    and
    \begin{equation*} 
        Y = 2 \sum_{i,j \in V} (L_i + R_i) w_{ij}.
    \end{equation*}
    Then, since the graph $G$ is loopless, the expectations of $X$ and $Y$ are
    \begin{equation*}
        E[X] = \sum_{i,j \in V} [2 \alpha_i \alpha_j + 2 \gamma_i \gamma_j 
        + (\alpha_i + \gamma_i)(1 - \alpha_j -\gamma_j)] w_{ij},
    \end{equation*}
    and
    \begin{equation*}
        E[Y] = 2 \sum_{i,j \in V} (\alpha_i + \gamma_i) w_{ij}. 
    \end{equation*}
    We will now show that the inequality $X(\omega) \geq \frac{1}{4} \beta_G Y(\omega)$
    holds for all $\omega \in \Omega$. Let $\omega \in \Omega$ be arbitrary.
    Consider the set $S = \{i \in V : L_i(\omega) = R_i(\omega) = 1\}$. 
    If $S$ is the empty set, then we get $X(\omega) \geq \beta_G Y(\omega)$
    from \cref{eq:betaG&betaG-tilde}, and we are done. Suppose that the set $S$ is nonempty.
    Denote by $x = (x_1, \dots, x_n)$ and $y = (y_1, \dots, y_n)$, the elements of $I^n$
    defined by 
    \begin{equation*}
        x_i = L_i(\omega) \quad \text{and} \quad y_i = 
        \begin{cases}
            0 & \text{if } i \in S,
            \\
            R_i(\omega) & \text{if } i \notin S,
        \end{cases} 
    \end{equation*}
    for all $1 \leq i \leq n$. Note that $x$ and $y$ are elements of $\{0,1\}^n$, 
    and they satisfy $0 < x + y \leq 1$. So, thanks to \cref{eq:betaG&betaG-tilde}, 
    it suffices to prove that $X(\omega) \geq \frac{1}{4} \tilde \beta_G(x,y) Y(\omega)$.
    Observe that
    \begin{align*}
        X(\omega) 
        & = 
        \sum_{i,j \notin S} 
        (2 L_i L_j + 2 R_i R_j + (L_i + R_i)(1 - L_j - R_j))(\omega) w_{ij}
        \\
        & \quad +
        \sum_{i \notin S, j \in S} 
        (2 L_i L_j + 2 R_i R_j + (L_i + R_i)(1 - L_j - R_j))(\omega) w_{ij}
        \\
        & \quad +
        \sum_{i \in S, j \notin S} 
        (2 L_i L_j + 2 R_i R_j + (L_i + R_i)(1 - L_j - R_j))(\omega) w_{ij}
        \\
        & \quad +
        \sum_{i \in S, j \in S} 
        (2 L_i L_j + 2 R_i R_j + (L_i + R_i)(1 - L_j - R_j))(\omega) w_{ij}
        \\
        & =
        \sum_{i,j \notin S} 
        (2 L_i L_j + 2 R_i R_j + (L_i + R_i)(1 - L_j - R_j))(\omega) w_{ij}
        \\
        & \quad +
        \sum_{i \notin S, j \in S} (L_i + R_i)(\omega) w_{ij}
        + \sum_{i \in S, j \notin S} 2 w_{ij} + \sum_{i \in S, j \in S} 2 w_{ij}
        \\
        & \geq
        \sum_{i,j \notin S} [2 x_i x_j + 2 y_i y_j + (x_i + y_i)(1 - x_j - y_j)] w_{ij}
        \\
        & \quad +
        \frac{1}{2} \sum_{i \notin S, j \in S} 2x_i w_{ij} 
        + \sum_{i \in S, j \notin S} (2x_j + 1 - x_j - y_j) w_{ij}
        + \sum_{i \in S, j \in S} 2 w_{ij}
        \\
        & \geq
        \frac{1}{2} \sum_{i,j \in V} 
        [2 x_i x_j + 2 y_i y_j + (x_i + y_i)(1 - x_j - y_j)] w_{ij},
    \end{align*}
    and that
    \begin{align*}
        Y(\omega) 
        & = 
        2 \sum_{i \notin S, j \in V} (L_i + R_i)(\omega) w_{ij}
        + 2 \sum_{i \in S, j \in V} (L_i + R_i)(\omega) w_{ij}
        \\
        & =
        2 \sum_{i \notin S, j \in V} (x_i + y_i) w_{ij} 
        + 2 \sum_{i \in S, j \in V} 2 (x_i + y_i) w_{ij}
        \\
        & \leq 
        2 \cdot 2 \sum_{i,j \in V} (x_i + y_i) w_{ij}.
    \end{align*}
    Thus, we arrive at the inequality 
    \begin{equation*}
        X(\omega) \geq \frac{1}{4} \tilde \beta_G(x,y) Y(\omega) 
        \geq \frac{1}{4} \beta_G Y(\omega). 
    \end{equation*}
    As this is true for all $\omega \in \Omega$, it implies that 
    $E[X] \geq \frac{1}{4} \beta_G E[Y]$, that is, 
    \begin{equation*}
        \frac{E[X]}{E[Y]} = \beta_{W_G} \geq \frac{1}{4} \beta_G,
    \end{equation*}
    using the fact that $E[Y]$ is positive.
\end{proof}

\begin{remark}
    Combining \cref{thm:main}, \cref{lemma:compare-lambdaG&WG} 
    and \cref{lemma:compare-betaG&WG} yields 
    \begin{equation*}
        \frac{\beta_G^2}{32} \leq 2 - \lambda_G^{\max} \leq 2 \beta_G,
    \end{equation*}
    which is the dual Cheeger--Buser inequality for graphs, up to a multiplicative constant.
\end{remark}

\section*{Acknowledgements}

The author is grateful to Jyoti Prakash Saha for suggesting the problem and 
having useful discussions. The author also acknowledges the fellowship 
from IISER Bhopal during the Integrated Ph.D. program.

\begin{bibdiv}
\begin{biblist}

\bib{Alon-vertexCheeger86}{article}{
      author={Alon, N.},
       title={Eigenvalues and expanders},
        date={1986},
        ISSN={0209-9683},
     journal={Combinatorica},
      volume={6},
      number={2},
       pages={83\ndash 96},
         url={https://doi.org/10.1007/BF02579166},
        note={Theory of computing (Singer Island, Fla., 1984)},
      review={\MR{875835}},
}

\bib{Alon-Milman85}{article}{
      author={Alon, N.},
      author={Milman, V.~D.},
       title={{$\lambda_1,$} isoperimetric inequalities for graphs, and
  superconcentrators},
        date={1985},
        ISSN={0095-8956,1096-0902},
     journal={J. Combin. Theory Ser. B},
      volume={38},
      number={1},
       pages={73\ndash 88},
         url={https://doi.org/10.1016/0095-8956(85)90092-9},
      review={\MR{782626}},
}

\bib{Lovasz-etal-Counthomo06}{incollection}{
      author={Borgs, Christian},
      author={Chayes, Jennifer},
      author={Lov\'asz, L\'aszl\'{o}},
      author={S\'os, Vera~T.},
      author={Vesztergombi, Katalin},
       title={Counting graph homomorphisms},
        date={2006},
   booktitle={Topics in discrete mathematics},
      series={Algorithms Combin.},
      volume={26},
   publisher={Springer, Berlin},
       pages={315\ndash 371},
         url={https://doi.org/10.1007/3-540-33700-8_18},
      review={\MR{2249277}},
}

\bib{Lovasz-etal-cgtseqDenseGraphs08}{article}{
      author={Borgs, C.},
      author={Chayes, J.~T.},
      author={Lov\'asz, L.},
      author={S\'os, V.~T.},
      author={Vesztergombi, K.},
       title={Convergent sequences of dense graphs. {I}. {S}ubgraph
  frequencies, metric properties and testing},
        date={2008},
        ISSN={0001-8708,1090-2082},
     journal={Adv. Math.},
      volume={219},
      number={6},
       pages={1801\ndash 1851},
         url={https://doi.org/10.1016/j.aim.2008.07.008},
      review={\MR{2455626}},
}

\bib{Bauer-Jost13}{article}{
      author={Bauer, Frank},
      author={Jost, J\"urgen},
       title={Bipartite and neighborhood graphs and the spectrum of the
  normalized graph {L}aplace operator},
        date={2013},
        ISSN={1019-8385,1944-9992},
     journal={Comm. Anal. Geom.},
      volume={21},
      number={4},
       pages={787\ndash 845},
         url={https://doi.org/10.4310/CAG.2013.v21.n4.a2},
      review={\MR{3078942}},
}

\bib{Chung-SpectralGraphTh97}{book}{
      author={Chung, Fan R.~K.},
       title={Spectral graph theory},
      series={CBMS Regional Conference Series in Mathematics},
   publisher={Conference Board of the Mathematical Sciences, Washington, DC; by
  the American Mathematical Society, Providence, RI},
        date={1997},
      volume={92},
        ISBN={0-8218-0315-8},
      review={\MR{1421568}},
}

\bib{Dodziuk84}{article}{
      author={Dodziuk, Jozef},
       title={Difference equations, isoperimetric inequality and transience of
  certain random walks},
        date={1984},
        ISSN={0002-9947,1088-6850},
     journal={Trans. Amer. Math. Soc.},
      volume={284},
      number={2},
       pages={787\ndash 794},
         url={https://doi.org/10.2307/1999107},
      review={\MR{743744}},
}

\bib{Desai-Rao94}{article}{
      author={Desai, Madhav},
      author={Rao, Vasant},
       title={A characterization of the smallest eigenvalue of a graph},
        date={1994},
        ISSN={0364-9024,1097-0118},
     journal={J. Graph Theory},
      volume={18},
      number={2},
       pages={181\ndash 194},
         url={https://doi.org/10.1002/jgt.3190180210},
      review={\MR{1258251}},
}

\bib{TilingsGraphons-21}{article}{
      author={Hladk\'y, Jan},
      author={Hu, Ping},
      author={Piguet, Diana},
       title={Tilings in graphons},
        date={2021},
        ISSN={0195-6698,1095-9971},
     journal={European J. Combin.},
      volume={93},
       pages={Paper No. 103284, 23},
         url={https://doi.org/10.1016/j.ejc.2020.103284},
      review={\MR{4186624}},
}

\bib{IndSets-cliques-colorings20}{article}{
      author={Hladk\'y, Jan},
      author={Rocha, Israel},
       title={Independent sets, cliques, and colorings in graphons},
        date={2020},
        ISSN={0195-6698,1095-9971},
     journal={European J. Combin.},
      volume={88},
       pages={103108, 18},
         url={https://doi.org/10.1016/j.ejc.2020.103108},
      review={\MR{4111720}},
}

\bib{Abhishek-Mahan24}{article}{
      author={Khetan, Abhishek},
      author={Mj, Mahan},
       title={Cheeger inequalities for graph limits},
        date={2024},
        ISSN={0373-0956,1777-5310},
     journal={Ann. Inst. Fourier (Grenoble)},
      volume={74},
      number={1},
       pages={257\ndash 305},
         url={https://doi.org/10.5802/aif.3584},
      review={\MR{4748172}},
}

\bib{BVLimaye-FA96}{book}{
      author={Limaye, Balmohan~V.},
       title={Functional analysis},
     edition={Second},
   publisher={New Age International Publishers Limited, New Delhi},
        date={1996},
        ISBN={81-224-0849-4},
      review={\MR{1427262}},
}

\bib{Lovasz-LargeNet12}{book}{
      author={Lov\'{a}sz, L\'{a}szl\'{o}},
       title={Large networks and graph limits},
      series={American Mathematical Society Colloquium Publications},
   publisher={American Mathematical Society, Providence, RI},
        date={2012},
      volume={60},
        ISBN={978-0-8218-9085-1},
         url={https://doi.org/10.1090/coll/060},
      review={\MR{3012035}},
}

\bib{Lovasz-Szegedy-limitsofDenseGraphs06}{article}{
      author={Lov\'asz, L\'aszl\'{o}},
      author={Szegedy, Bal\'azs},
       title={Limits of dense graph sequences},
        date={2006},
        ISSN={0095-8956,1096-0902},
     journal={J. Combin. Theory Ser. B},
      volume={96},
      number={6},
       pages={933\ndash 957},
         url={https://doi.org/10.1016/j.jctb.2006.05.002},
      review={\MR{2274085}},
}

\bib{Lovasz-Szegedy-SzemerdiLemma07}{article}{
      author={Lov\'asz, L\'aszl\'{o}},
      author={Szegedy, Bal\'azs},
       title={Szemer\'edi's lemma for the analyst},
        date={2007},
        ISSN={1016-443X,1420-8970},
     journal={Geom. Funct. Anal.},
      volume={17},
      number={1},
       pages={252\ndash 270},
         url={https://doi.org/10.1007/s00039-007-0599-6},
      review={\MR{2306658}},
}

\bib{Trevisan-MaxCut12}{article}{
      author={Trevisan, Luca},
       title={Max cut and the smallest eigenvalue},
        date={2012},
        ISSN={0097-5397,1095-7111},
     journal={SIAM J. Comput.},
      volume={41},
      number={6},
       pages={1769\ndash 1786},
         url={https://doi.org/10.1137/090773714},
      review={\MR{3029271}},
}

\bib{Trevisan-notes-expanders}{misc}{
      author={Trevisan, Luca},
       title={Lecture notes on graph partitioning, expanders and spectral
  methods},
        date={2017},
        note={Available at
  \url{https://lucatrevisan.github.io/books/expanders-2016.pdf}},
}

\end{biblist}
\end{bibdiv}

\end{document}